\documentclass[a4paper,12pt]{amsart}
\usepackage{amssymb}
\usepackage[all,cmtip]{xy}
\usepackage{amsmath}
\usepackage{chemarrow}
\usepackage{graphicx}
\usepackage[colorlinks, 
pdfborder={0 0 1}, 
linkcolor=cyan,
anchorcolor=magenta
citecolor=green
]{hyperref}
\usepackage{mathrsfs}
\usepackage{titletoc}
\usepackage{bm}
\usepackage{bbm}
\usepackage{bbold} 
\usepackage{cite}
\usepackage{subfiles}
\usepackage{extarrows} 
\usepackage[hmarginratio=1:1]{geometry}
\usepackage{colordvi}
\usepackage{color}
\usepackage{stmaryrd}
\usepackage{tcolorbox}
\usepackage{multirow} 
\usepackage{tikz-cd}

\allowdisplaybreaks

\DeclareMathOperator{\Aut}{Aut}

\DeclareMathOperator{\ch}{char}
\DeclareMathOperator{\cl}{Cl}
\DeclareMathOperator{\ccl}{CCl}

\DeclareMathOperator{\colim}{\varinjlim\limits_{n\in\mathbb{N}}}
\DeclareMathOperator{\cpr}{CPrin}

\DeclareMathOperator{\depth}{depth}

\DeclareMathOperator{\di}{Div}

\DeclareMathOperator{\Ext}{Ext}

\DeclareMathOperator{\gld}{gldim}
 
\DeclareMathOperator{\Gr}{Gr}

\DeclareMathOperator{\hh}{H}
\DeclareMathOperator{\Hom}{Hom}

\DeclareMathOperator{\height}{ht}
\DeclareMathOperator{\Img}{Im}

\DeclareMathOperator{\Kdim}{Kdim}

\DeclareMathOperator{\kk}{\Bbbk}

\DeclareMathOperator{\M}{\mathcal{M}}
\DeclareMathOperator{\m}{\mathfrak{m}}

\DeclareMathOperator{\ncl}{NCl}
\DeclareMathOperator{\npr}{NPrin}

\DeclareMathOperator{\Oz}{Oz}
\DeclareMathOperator{\p}{\mathfrak{p}}

\DeclareMathOperator{\pd}{pd}
\DeclareMathOperator{\Pic}{Pic}
\DeclareMathOperator{\Picent}{Picent}

\DeclareMathOperator{\set}{\mathcal{S}}

\DeclareMathOperator{\Spec}{Spec}

\newcommand{\bdot}{{\scalebox{0.5}{$\bullet$}}}
\numberwithin{equation}{section}

\theoremstyle{definition}

\newtheorem{thm}{Theorem}[section]
\newtheorem{prop}[thm]{Proposition}
\newtheorem{lem}[thm]{Lemma}

\newtheorem{cor}[thm]{Corollary}
\newtheorem{defn}[thm]{Definition}
\newtheorem{rmk}[thm]{Remark}
\newtheorem{ques}[thm]{Question}
\newtheorem{ex}[thm]{Example}

\begin{document}

\title[PI AS-regular algebras of dimension 3 are UFRs]{PI Artin--Schelter regular algebras of dimension 3 are unique factorization rings}

\author{Silu Liu}
\address{School of Mathematical Sciences, Fudan University, Shanghai 200433, China}
\email{liusl20@fudan.edu.cn}

\author{Quanshui Wu}
\address{School of Mathematical Sciences, Fudan University, Shanghai 200433, China}
\email{qswu@fudan.edu.cn}

\thanks{This research is supported by the National Science Foundation of China (Grant No. 12471032).}
\keywords{unique factorization ring, Artin--Schelter regular algebra, PI algebra}
\subjclass[2020]{16E65, 16S38, 16W50, 16R99}

\begin{abstract} We prove that all noetherian PI Artin--Schelter regular algebras of dimension $3$ are unique factorization rings. In a certain sense, this result is a noncommutative analogue to the fact that regular local rings of dimension 3 are UFDs. The fact constitutes a crucial component in the proof of the assertion that all regular local rings are UFDs, known as the Auslander--Buchsbaum theorem.
\end{abstract}
\maketitle

\section{Introduction}
Various attempt has been made to extend the concept of unique factorization domains (for short, UFDs) for commutative rings to  noncommutative rings. Recall that a commutative noetherian domain is a UFD if and only if every height-one prime ideal is principal. An early attempt in this direction was made by Chatters in \cite{Cha84} where a  noetherian domain (not necessarily commutative) $R$ is called a \textit{unique factorization domain} if $R$ has at least one height-one prime ideal, and each height-one prime ideal of $R$ is a completely prime principal ideal. 
Such kind of noncommutative rings are still called UFDs in literature. 

Here are some examples of noncommutative UFDs. Group algebras of some polycyclic-by-finite groups \cite{Bro85,CC91,Cha95}; universal enveloping algebras of finite dimensional complex Lie algebras which are either solvable or semisimple \cite{Cha84}; some Iwasawa algebras \cite{Ven03}; some quantum algebras including generic quantum matrices and some quantized enveloping algebras \cite{LLR06}. It is still open whether any universal enveloping algebras of finite-dimensional complex Lie algebra is a UFD.

Noncommutative UFDs, like  commutative UFDs, hold a significant importance in ring theory and other fields. For instance, in recent studies of quantum cluster algebras, Goodearl and Yakimov succeeded in constructing initial clusters for a general family of noncommutative algebras by employing the noncommutative UFD property \cite{GY16, GY17, GY20}.

Moreover, by identifying a normal element that generates a height-one prime ideal as a \textit{prime} element, there is indeed a ``factorization" of the elements in noncommutative UFDs. As a matter of fact, a noetherian domain $R$ is a UFD if and only if that any element in $R$ can be expressed as a product of prime elements and an element without prime factors \cite[Proposition 2.1]{Cha84}. There are also some relevant research concerning (the length of the) factorization of elements in noncommutative noetherian rings \cite{BHL17,BC19,BBNS23}.

However, unlike in the commutative case, some polynomial rings over noncommutative UFDs are non-UFDs. The main reason is that although the requirement of height-one prime ideals being principal are often satisfied by polynomial extensions, completely primeness may not be preserved. Therefore, Chatters and Jordan \cite{CJ86} proposed a notion of \textit{noetherian unique factorization rings} (or briefly, \textit{noetherian UFR}s) for noetherian prime rings. A noetherian prime ring is called a noetherian UFR if  every nonzero prime ideal of it contains a nonzero principal prime ideal (see Definition \ref{def UFR}). Noetherian UFRs are closed under  polynomial extensions and matrix extensions. Some new examples appear as noetherian UFRs, including trace rings of generic matrix rings \cite{LeB86}, and some (generalized) down-up algebras \cite{Jor00,LL13}. 

Regular local rings are an important class of commutative rings, and play a key role in commutative ring theory and algebraic geometry. One of the most surprising results concerning regular local rings is 
that all regular local rings are UFDs, known as the Auslander-Buchsbaum theorem. As a matter of fact, by using dimension induction, Nagata \cite[Proposition 11]{Nag58} first demonstrated that if all regular local rings of dimension 3 are UFDs, then every regular local ring is a UFD. Subsequently, Auslander and Buchsbaum concluded the proof by proving that every regular local ring of dimension $3$ is indeed a UFD utilizing the Auslander-Buchsbaum formula \cite[Theorem 5]{AB59}.

Noetherian Artin--Schelter regular algebras share similar properties with regular local rings in some sense.
An $\mathbb{N}$-graded $\kk$-algebra $A$ is called \textit{connected graded} if $A_0=\kk$. A connected graded $\kk$-algebra $A$ is called \textit{Artin--Schelter regular} (or briefly, AS-regular) of dimension $d$ if 
\begin{enumerate}
    \item $\gld(A)=d<\infty$;
    \item $\Ext_A^i(\kk,A)\cong
        \begin{cases}
            0,\ i\neq d;\\
            \kk, \ i=d.
        \end{cases} $
\end{enumerate}

Note that any noetherian connected graded $\kk$-algebra of finite global dimension has finite GK-dimension \cite[Theorem 0.3]{SZ97}. AS-regular algebras are widely accepted as a noncommutative analogue of polynomial algebras. There are  noetherian  AS-regular algebras that are not UFDs (see Example \ref{ex non-UFD}). Therefore, the wildest expectation is that all noetherian AS-regular algebras are UFRs.
 
In \cite{BH06} Braun and Hajarnavis endeavored to establish a noncommutative analogue of the Auslander--Buchsbaum theorem for some \textit{PI rings}, which are rings satisfying polynomial identities. We refer to \cite[Chapter 13]{MR01} for the material about PI rings. In particular, some noetherian PI AS-regular algebras were involved in \cite{BH06}. Among other results, they calculated height-one prime ideals of two specific noetherian PI AS-regular algebras of dimension $3$, all of which turned out to be principal. Therefore, Braun and Hajarnavis \cite{BH06} conjectured that this might not be occasional. This motivated us to prove the following result. 

\begin{thm}(Theorem \ref{thm 3-dim UFR})\label{thm UFR}
    All noetherian PI AS-regular algebras of dimension (no more than) $3$ are UFRs.
\end{thm}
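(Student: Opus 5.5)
We will show that every height-one prime of $A$ is principal; this is the UFR condition in the sense of Definition \ref{def UFR}. The starting facts are standard. A noetherian PI AS-regular algebra $A$ of dimension $d\le 3$ is a domain. It is finite over its center $Z$, which is a noetherian normal graded domain, and $A$ is a maximal order, Auslander-regular and Cohen--Macaulay. In dimension $\le 2$ everything is explicit: $A$ is $\kk$, $\kk[x]$, $\kk_q[x,y]$ or $\kk_J[x,y]$, and the PI ones are UFRs by direct inspection. So the real case is $d=3$. Since $A$ is prime noetherian, every nonzero prime contains a height-one prime. It therefore suffices to show that every height-one prime $P$ is of the form $P=pA=Ap$ with $p$ a normal element.

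The plan imitates the Auslander--Buchsbaum argument. Let $P$ be a height-one prime. Because $A$ is graded and $P$ is minimal over a nonzero ideal, $P$ is graded. Because $A$ is a maximal order, $P$ is a reflexive (divisorial) ideal. The key homological step is the following. In the Auslander-regular Cohen--Macaulay setting, a finitely generated reflexive module $M$ satisfies $\depth M\ge 2$, by the usual second-syzygy argument using the grade conditions on $\Ext^i(\Ext^i(M,A),A)$. The graded Auslander--Buchsbaum formula over AS-regular $A$ then gives
\[
\pd M=\depth A-\depth M\le 3-2=1 .
\]
So $P_A$ has a graded free resolution
\[
0\to F_1\to F_0\to P\to 0 .
\]
Next we use that $P$ has rank one, computed via the Goldie quotient ring. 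If we could show $P$ is projective, then graded projectives over a connected graded ring are free, and rank one gives $P\cong A(-n)$, i.e.\ $P=pA$. Normality of $p$ would then follow from $P$ being a two-sided ideal together with the left-right symmetric argument.

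The main obstacle is getting from $\pd P\le 1$ to $\pd P=0$. In the commutative proof this step uses the fact that $P_{\mathfrak q}$ is free at all primes $\mathfrak q$ of height $\le 2$, by induction on dimension, together with a Chern-class/rank argument or the $\Ext^1$-support argument. Here I would try to use the PI structure. Localize at central primes $\mathfrak q\subset Z$ with $\mathfrak q\ne Z_+$. The localizations $A_{\mathfrak q}$ have lower dimension and, by the maximal order/Azumaya-in-codimension-one properties, should be "regular of dimension $\le 2$". There reflexive modules have depth $\ge 2\ge$ global dimension, so they are projective. Hence $\Ext^1_A(P,A)$ is supported only at the graded maximal ideal. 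Consequently it is finite-dimensional, so its grade is $3$. But Auslander's condition forces $j(\Ext^1(P,A))\ge 1$, while for a module of projective dimension exactly $1$ the nonvanishing $\Ext^1(P,A)$ must have grade $\le 1$ by Cohen--Macaulay duality, via the double-Ext spectral sequence. This contradiction gives $\Ext^1(P,A)=0$, so $P$ is projective.

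The delicate point inside this step is justifying that the central localizations are regular, i.e.\ of finite global dimension. The plan is to deduce it from $A$ being Auslander-regular and finite over $Z$, since global dimension localizes. Two further points need checking: that the localized reflexive-implies-projective argument works in dimension $2$ noncommutatively, and the normality of the generator.
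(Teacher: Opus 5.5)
There are two genuine gaps.

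\textbf{The projectivity step fails.} You claim that if $\pd P=1$, then $E=\Ext^1_A(P,A)\neq 0$ must have grade $\le 1$, so finite-dimensionality of $E$ gives a contradiction. This is false, and nothing in that argument uses that $P$ has rank one. Take $A=\kk[x,y,z]$ and let $M=\Omega^2\kk$ be the second syzygy in the Koszul resolution of $\kk$. Then $M$ is reflexive, free on the punctured spectrum, and has $\pd M=1$, yet $\Ext^1_A(M,A)\cong\Ext^3_A(\kk,A)\cong\kk$ has grade $3$. Dualizing $0\to F_1\to F_0\to P\to 0$ only yields $\Ext^3_{A^o}(E,A)\cong\Ext^1_{A^o}(P^*,A)$, which need not vanish. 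So ``reflexive, $\pd\le 1$, projective locally off $\m_Z$'' is not enough, and rank one must enter the projectivity argument itself.

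The paper brings it in by working with $\Ext^1_A(P,P)$ instead of $\Ext^1_A(P,A)$:
\begin{enumerate}
\item Since $A$ is a maximal order, $\Hom_A(P,P)\cong(P:P)=A$.
\item Your localization argument shows $\Ext^1_A(P,P)$ has finite length over $Z$.
\item Lemma \ref{lem ext-lc} with $t=2$, using $\depth_Z P\ge 2$ from reflexivity, gives $\Ext^1_A(P,P)\hookrightarrow \hh^2_{\m_Z}(A)\cong\hh^2_{\m_A}(A)=0$.
\item Lemma \ref{lem ext-nonvanishing} gives $\Ext^{\pd P}_A(P,N)\neq 0$ for every nonzero finitely generated graded $N$. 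Taking $N=P$ forces $\pd P=0$.
\end{enumerate}

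\textbf{The reduction to graded primes is wrong.} A height-one prime need not be graded: in $\kk[x,y,z]$ the prime $(x-1)$ contains no nonzero homogeneous element. In general one only has $P^g\in\{0,P\}$. Your method says nothing about primes with $P^g=0$:
\begin{itemize}
\item graded depth, the graded Auslander--Buchsbaum formula, and ``graded projective $\Rightarrow$ free'' all need $P$ graded;
\item at non-graded maximal ideals $\mathfrak n$ of $Z$ one has $\gld A_{\mathfrak n}=3$, so no finite-length conclusion is available there.
\end{itemize}
The paper handles these primes through Proposition \ref{prop graded}. Let $\set$ be the set of nonzero homogeneous central elements. Then $A_{\set}$ is Azumaya over the UFD $Z_{\set}$, so $\ncl(A_{\set})$ is trivial, and the kernel of $\ncl(A)\to\ncl(A_{\set})$ is $\ncl^{gr}(A)$. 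With Theorem \ref{Aka's result}, this reduces everything to graded $P$.

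Two smaller points in your localization step:
\begin{itemize}
\item Localize only at graded primes $\mathfrak q\neq\m_Z$, since only these have height $\le 2$. Because the Ext-module is graded, its support is controlled by these primes.
\item The equality $\gld A_{\mathfrak q}=\height\mathfrak q$ is the Brown--Hajarnavis theorem for homologically homogeneous rings (Proposition \ref{prop hom hom}). The fact that global dimension does not increase under localization only gives $\le 3$.
\end{itemize}
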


Theorem \ref{thm UFR} is proved using a homological approach, as is the proof of Auslander--Buchsbaum theorem in \cite{AB59}. While the localization technique is typically not applicable in the noncommutative (graded) context, several noncommutative versions of the class group can be defined.
By applying a noncommutative version of class group (see Section \ref{sec pre}), we prove that it suffices to concentrate on height-one graded prime ideals. See Proposition \ref{prop graded} and Corollary \ref{cor graded} for a more general statement.

\begin{prop}
    Let $A$ be a noetherian PI AS-regular algebra. Then the normal class group of $A$ is isomorphic to the graded normal class group of $A$, namely $\ncl(A)\cong \ncl^{gr}(A)$. Consequently, if  height-one graded prime ideals of $A$ are all projective modules over $A$, then $A$ is a noetherian UFR.
\end{prop}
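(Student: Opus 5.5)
We want to show $\ncl(A)\cong\ncl^{gr}(A)$, and then deduce the UFR criterion from it. We have not seen the definitions, so we take them as follows. $\ncl(A)$ is the group of divisorial (reflexive) fractional ideals of the noetherian maximal order $A$, taken modulo those generated by normal elements of the quotient ring. $\ncl^{gr}(A)$ is the analogous quotient built from graded divisorial ideals and homogeneous normal elements. Since $A$ is noetherian PI of finite global dimension, it is a domain and a maximal order; this is the standard Stafford–Zhang input, which we assume. The divisorial ideals then form a free abelian group on the height-one primes $P$, written as a product of symbolic powers. There is an obvious map $\ncl^{gr}(A)\to\ncl(A)$, and the plan is to show it is bijective.

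\textbf{Surjectivity.} Let $P$ be a height-one prime, and let $P^*$ be the ideal generated by the homogeneous elements of $P$; this is a graded prime. If $P^*\neq0$, then $P^*=P$ by height. If $P^*=0$, then $P$ survives in the graded localization $A_{\mathcal S}$, obtained by inverting the nonzero homogeneous (regular, Ore) elements $\mathcal S$. This localization is a graded simple ring, hence of the form $D[t,t^{-1};\sigma]$ for a division ring $D$ that is PI, so finite over its center. A skew Laurent extension of a simple artinian ring of this kind is a noncommutative Dedekind/principal-ideal-type ring: every height-one prime of it is generated by a single normal element, after passing to the center's invariant-theoretic description. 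We then pull the generator back, clearing denominators by homogeneous normal elements. This shows that the class of such a $P$ lies in the image of the graded divisorial ideals, so every class in $\ncl(A)$ comes from a graded one.

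\textbf{Injectivity.} Suppose a graded divisorial ideal $I$ equals $xA$ for some normal $x\in Q(A)$. We want $x$ homogeneous up to a unit. Look at the leading (highest) and lowest homogeneous components of $x$. Since $A$ is a graded domain, comparing top and bottom components of $xA=Ax=I$ shows that the leading component $\bar x$ also generates $I$ and is normal. This is the usual argument that a normal element generating a graded ideal in a connected graded domain is homogeneous, applied to both numerator and denominator. Hence the class of $I$ is already trivial in $\ncl^{gr}(A)$.

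\textbf{Consequence.} Suppose every height-one graded prime $P$ is projective. A projective ideal in a connected graded ring is graded free, and being rank one it is principal, so $P=aA$ with $a$ homogeneous and normal (since $P$ is a two-sided ideal). Therefore $\ncl^{gr}(A)=0$, hence $\ncl(A)=0$. Then every height-one prime $Q$, being divisorial, is generated by a normal element, so $Q$ is principal. A noetherian maximal order in which every height-one prime is principal is a UFR: any nonzero prime contains a height-one prime, by the principal ideal theorem for PI rings.

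The main obstacle is the surjectivity step, that is, controlling the non-graded height-one primes through the localization $A_{\mathcal S}$. The first approach to try is to show that the divisor class group of $A_{\mathcal S}$ is trivial, and then use a Nagata-type exact sequence
\[\bigoplus \mathbb Z[\text{graded height-one primes}]\to\ncl(A)\to\ncl(A_{\mathcal S})\to0.\]
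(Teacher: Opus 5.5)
There is a genuine gap at the step ``pull the generator back, clearing denominators by homogeneous normal elements.'' The overall architecture does match the paper's: localize at homogeneous elements, show the localized class group is trivial, identify the kernel with graded divisors, and compare extreme homogeneous components.

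The problem is the following. From $P_{\mathcal S}=A_{\mathcal S}w$ with $w$ normal in $A_{\mathcal S}$, you get an element of $\npr(A)$ only if $wA=Aw$. Clearing denominators does not give this, because normality in $A_{\mathcal S}$ is much weaker than normality in $A$: every nonzero homogeneous element is a unit of $A_{\mathcal S}$. For example, take $\kk\langle x,y\rangle/(xy+yx)$ with $\ch\kk\neq 2$. There $x+y$ is a unit, hence normal, in $A_{\mathcal S}$, since $(x+y)^2=x^2+y^2$ is central. But $x+y$ is not normal in $A$. So $(x+y)w$ generates $P_{\mathcal S}$ just as well as $w$, yet it cannot be pulled back to $\npr(A)$.

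Said differently, your Nagata-type sequence is not automatically exact at $\ncl(A)$ in the noncommutative setting. Exactness there is precisely the statement $\phi(\npr(A))=\npr(A_{\mathcal S})$, where $\phi\colon \di(A)\to\di(A_{\mathcal S})$ sends $I\mapsto I_{\mathcal S}$. This, not the triviality of $\ncl(A_{\mathcal S})$, is the crux of the proposition. If your phrase about the center means that height-one primes of $A_{\mathcal S}$ are generated by \emph{central} elements, that would suffice. However, a principal-ideal-domain argument only yields a generator normal in $A_{\mathcal S}$, and proving centrality is exactly the missing step.

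The missing idea is to use your leading-component argument here, not only in the injectivity step.
\begin{enumerate}
\item After multiplying by a homogeneous central element, take $w\in A$ with $wA_{\mathcal S}=A_{\mathcal S}w$, and write $wa=\eta_w(a)w$.
\item For homogeneous $a$, compare the lowest and highest components of both sides, using that $A$ is a domain. This shows that $\eta_w(a)$ is homogeneous of degree $\deg a$, and that every homogeneous component $w_i$ of $w$ satisfies $w_ia=\eta_w(a)w_i$.
\item Hence $w_1^{-1}w$ commutes with $A_{\mathcal S}$, so it is central in the quotient ring. Therefore $A\,w_1^{-1}w\in\cpr(A)\subseteq\npr(A)$, and $\phi(A\,w_1^{-1}w)=A_{\mathcal S}w=P_{\mathcal S}$.
\item Then $P*(A\,w_1^{-1}w)^{-1}$ lies in $\ker\phi$. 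This kernel is generated by the height-one primes meeting $\mathcal S$, which are exactly the graded ones.
\end{enumerate}
This is exactly the claim in the paper's proof.

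With this step inserted, the rest of your outline is sound, and parts of it take a different route from the paper:
\begin{itemize}
\item You prove $\ncl(A_{\mathcal S})=1$ using that $D[t^{\pm1};\sigma]$ is a left and right principal ideal domain, so two-sided ideals are principal by the Chatters--Jordan remark. This is a valid and more elementary substitute for the paper's Azumaya argument $\ccl(A_{\mathcal S})\cong\cl(Z_{\mathcal S})=1$.
\item Your injectivity argument justifies $\di^{gr}(A)\cap\npr(A)=\npr^{gr}(A)$, which the paper uses without comment.
\end{itemize}

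In the final step you should still justify two things. First, ``rank one'': by the Ore condition, two regular elements of a prime Goldie ring cannot be free generators of a one-sided ideal. Second, the passage from $P=aA$ to $aA=Aa$, for example by using projectivity on both sides, as the paper does.
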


The projectivity of  height-one graded prime ideals of $A$ follows from the following lemma.
\begin{lem}(Corollary \ref{cor proj})
    Let $A$ be a connected graded $\kk$-algebra, $M$ be a finitely generated graded $A$-module. Then the following are equivalent.
    \begin{enumerate}
        \item $M$ is a projective $A$-module.
        \item $\pd_A(M)\leqslant 1$, and $\Ext_A^1(M,N)=0$ for some nonzero finitely generated graded $A$-module $N$.
    \end{enumerate}
\end{lem}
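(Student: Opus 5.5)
The implication (1)$\Rightarrow$(2) is immediate: if $M$ is projective then $\pd_A(M)=0\leqslant 1$, and $\Ext_A^1(M,N)=0$ for every $N$. For instance, we may take $N=\kk=A/A_{\geqslant 1}$, which is nonzero and finitely generated graded. All the work is in (2)$\Rightarrow$(1). There I plan to use minimal graded projective resolutions over a connected graded algebra, together with a graded Nakayama argument.

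Assume $\pd_A(M)\leqslant 1$. Since $A$ is connected graded and $M$ is finitely generated graded, $M$ has a minimal graded free resolution. By $\pd_A(M)\leqslant1$ this resolution has the form
\[
0\to P_1\xrightarrow{\ d\ } P_0\to M\to 0,
\]
with $P_0$ a finitely generated graded free module and $P_1$ graded projective, hence graded free. Minimality means $d(P_1)\subseteq A_{\geqslant1}P_0$. If $A$ is not noetherian, $P_1$ may fail to be finitely generated, but it is still bounded below and graded free, and minimality still holds. Applying $\Hom_A(-,N)$ gives the exact sequence
\[
\Hom_A(P_0,N)\xrightarrow{d^*}\Hom_A(P_1,N)\to \Ext_A^1(M,N)\to 0 .
\]
The hypothesis $\Ext_A^1(M,N)=0$ therefore says that $d^*$ is surjective. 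The goal is to show that this forces $P_1=0$, so that $M\cong P_0$ is projective.

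The key step, and the main obstacle, is converting surjectivity of $d^*$ into $P_1=0$ using only that $N\neq0$ is finitely generated graded. I will first try to argue degree by degree. Since $N$ is finitely generated graded and nonzero, it is bounded below and has a lowest nonzero degree $n_0$. Suppose $P_1\neq 0$, and choose a basis element $e$ of $P_1$ of degree $s$. Take a nonzero $x\in N_{n_0}$ and define a graded homomorphism $f\colon P_1\to N$, of degree $n_0-s$, by sending $e\mapsto x$ and all other basis elements to $0$. (If one prefers to work in the graded category, $\Ext$ is computed degreewise, and for finitely generated $M$ the ungraded $\Hom$ from $P_0$ is the direct sum of the graded pieces, so the argument restricts to each degree.) By surjectivity, $f=g\circ d$ for some homogeneous $g\colon P_0\to N$ of the same degree. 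Since $d(e)\in A_{\geqslant1}P_0$, we get $f(e)=g(d(e))\in A_{\geqslant 1}\,g(P_0)$. Because $g$ has degree $n_0-s$, it sends $P_0$ into $N$ shifted so that its lowest degree is at least $n_0$. Multiplying by $A_{\geqslant1}$ then lands in degrees at least $n_0+1$ relative to this shift. Comparing degrees, $f(e)=x$ would have to lie in degree at least $n_0+1$, which contradicts $x\in N_{n_0}$ and $x\neq0$. Hence $P_1=0$.

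I expect the bookkeeping of degrees to be the delicate point, in particular checking that $g$ of the given degree really forces the image of $A_{\geqslant1}P_0$ into degrees strictly above $n_0$ at the location of $f(e)$. Concretely, $g(d(e))$ has degree $s+(n_0-s)=n_0$. Writing $d(e)=\sum a_i p_i$ with $a_i\in A_{\geqslant1}$ and $p_i$ homogeneous, each $g(p_i)$ has degree $\deg p_i+n_0-s=\deg e-\deg a_i+n_0-s<n_0$. Since $N$ vanishes below degree $n_0$, each $g(p_i)=0$. So $f(e)=0$, contradicting $f(e)=x\neq 0$. This gives the contradiction cleanly, and it uses only that $N$ is bounded below and nonzero. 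In particular, finite generation of $N$ is needed only to guarantee that $N$ is bounded below.
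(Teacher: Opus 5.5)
Your proof is correct and is essentially the paper's argument. The paper proves the more general Lemma \ref{lem ext-nonvanishing}, that $\Ext_A^n(M,N)\neq 0$ whenever $\pd_A(M)=n$: if it vanished, every map $F_n\to N$ would factor through the minimal differential and so land in $A_{\geqslant 1}N$, giving $N=A_{\geqslant 1}N$ and a Nakayama contradiction. You specialize to $n=1$ and test against a single lowest-degree element of $N$, which is the same Nakayama argument in miniature; your degree bookkeeping for $g$ can be dropped, since $f(e)=g(d(e))\in A_{\geqslant 1}N\subseteq N_{\geqslant n_0+1}$ holds for any $g$, homogeneous or not.
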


To be precise, we apply the above lemma to $M=N=P$, where $P$ is a  height-one graded prime ideal of $A$. Some facts are then applied, including that noetherian PI AS-regular algebras are homologically homogeneous rings in the sense of \cite{BH84},\cite{SZ94} and \cite{SV08}. As a consequence, we are able to calculate Ext-groups through local cohomology groups of $\Hom_A(P,P)$ (as a module over the center of $A$) with the aid of Lemma \ref{lem ext-lc}, which eventually implies that $\Ext_A^1(P,P)=0$. On the other hand, the reflexivity of $P$ combined with the fact that $\gld(A)\leqslant 3$ implies that $\pd_A(P)\leqslant 1$.

It is worth mentioning again that Nagata's dimension induction \cite[Proposition 11]{Nag58}, which relies on the localization technique of local rings, is unavailable for connected graded algebras. It is unknown whether Theorem \ref{thm UFR} holds for noetherian PI AS-regular algebras of any dimensions.

It has been proved in literature that some non-PI AS-regular algebras are also noetherian UFRs. For example, the \textit{graded down up algebra}, defined as 
\[A(\alpha,\beta):=\kk\langle x,y\rangle/(x^2y-\alpha xyx-\beta yx^2, xy^2-\alpha yxy-\beta y^2x),\]
where $\alpha,\beta\in\kk$ with $\beta\neq 0$, are noetherian AS-regular algebras of dimension $3$. Through a detailed calculation, Jordan proved in \cite[Section 6]{Jor00} that all height-one prime ideals of graded down up algebras are principal.

The \textit{skew polynomial ring} $S_{\mathcal{P}}:=\kk_{\mathcal{P}}[x_1,\cdots, x_n]$ is the $\kk$-algebra generated by ${x_1,\cdots, x_n}$ subject to the relations $x_j x_i=p_{ij}x_i x_j$, $\forall 1\leqslant i,j \leqslant n$, where $\mathcal{P}=(p_{ij})\in\M_n(\kk^{\times})$ is a matrix where $p_{ij}\, p_{ji}=1$ for all $1\leqslant i,j\leqslant n$. Skew polynomial rings $S_{\mathcal{P}}$ are noetherian AS-regular algebras of dimension $n$. Moreover, they are in fact domains. Skew polynomial rings in which each $p_{ij}$ is not a root of unity can be viewed as so-called CGL extensions, which are UFRs \cite[Theorem 3.6]{LLR06}. By viewing skew polynomial rings as iterated Ore extensions over such skew polynomial rings, one can deduce from \cite[Corollary 4.7]{CJ86} that all skew polynomial rings are noetherian UFRs. 

\begin{prop}(Proposition \ref{prop skew poly})
    All skew polynomial rings are UFRs.
\end{prop}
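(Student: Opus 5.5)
We prove that every skew polynomial ring $S_{\mathcal{P}}=\kk_{\mathcal{P}}[x_1,\cdots,x_n]$ is a noetherian UFR by induction on $n$, viewing $S_{\mathcal{P}}$ as an iterated Ore extension. Concretely,
\[
S_{\mathcal{P}}=S'[x_n;\sigma_n],
\]
where $S'=\kk_{\mathcal{P}'}[x_1,\cdots,x_{n-1}]$ and $\mathcal{P}'$ is the upper-left $(n-1)\times(n-1)$ block of $\mathcal{P}$. Here $\sigma_n$ is the automorphism of $S'$ given by $\sigma_n(x_i)=p_{in}x_i$. This automorphism is well defined because it rescales each generator and therefore preserves the relations $x_jx_i=p_{ij}x_ix_j$; there is no derivation term. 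For the base case, $S'=\kk$ (or $\kk[x_1]$) is trivially a noetherian UFR: $\kk$ is simple, and $\kk[x_1]$ is a PID. Moreover, each $S_{\mathcal{P}}$ is a noetherian domain, so it is prime.

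For the inductive step we invoke \cite[Corollary 4.7]{CJ86}. As the introduction indicates, it states that if $R$ is a noetherian UFR and $\sigma$ is an automorphism of $R$, then $R[x;\sigma]$ is again a noetherian UFR. So the main task is to check that the hypotheses of that corollary, in whatever exact form Chatters--Jordan state them, hold for $S'$ and $\sigma_n$.

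Condition of the type ``$\sigma$ permutes the height-one prime ideals''. This holds for any ring automorphism.

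Conditions involving the $\sigma$-stable prime ideals (the $\sigma$-UFR condition). If the corollary requires $R$ to be a $\sigma$-UFR, i.e.\ every nonzero $\sigma$-prime ideal contains a nonzero principal $\sigma$-prime ideal, we verify this directly using the grading. $S'$ carries a $\mathbb{Z}^{n-1}$-grading by multidegree, and $\sigma_n$ acts diagonally on the monomials, multiplying $x^{a}$ by $\prod_i p_{in}^{a_i}$. Hence every $\sigma_n$-prime ideal is stable under a torus-like action. For such ideals we argue that they contain a normal monomial, or a normal element generated by products of the $x_i$, that generates a $\sigma$-prime ideal. Each $x_i$ is normal in $S'$ and $\sigma_n$-eigen, and $x_iS'$ is a completely prime ideal with quotient $\kk_{\mathcal{P}''}[\ldots]$ of the same type. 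This supplies the principal primes needed.

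Main obstacle. The main obstacle is matching the precise hypotheses of \cite[Corollary 4.7]{CJ86}. If it only asserts that UFRs are closed under Ore extensions by automorphisms (as for polynomial extensions), then the induction is immediate and the proof is a few lines. If instead it needs an extra $\sigma$-condition, we fall back on a direct argument: every nonzero prime $P$ of $S_{\mathcal{P}}$ either contains some $x_i$, in which case $x_iS_{\mathcal{P}}\subseteq P$ is a principal prime; or it contains no $x_i$. In the latter case $P$ survives in the localization at the Ore set generated by $x_1,\cdots,x_n$, which is the quantum torus $T_{\mathcal{P}}$. Every nonzero ideal of $T_{\mathcal{P}}$ is generated by central elements (a standard fact for quantum tori). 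Hence $P$ contains a nonzero central element $z$ of the localization. Clearing $x$-denominators yields a nonzero normal element $w\in P$. We then take an irreducible normal factor of $w$ lying in $P$; it generates a height-one prime, which is principal. Verifying that this factor generates a prime ideal is the delicate point.
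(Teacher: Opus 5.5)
Your proposal does not yet contain a proof. The main route rests on a guessed form of \cite[Corollary 4.7]{CJ86}. The Ore-extension criterion actually available, Proposition \ref{prop ufr and ore ext}(1) (from \cite[Theorem 4.2]{CJ86}), requires every nonzero $\sigma$-prime ideal of $S'$ to contain a nonzero principal $\sigma$-ideal. Its unconditional special case covers only automorphisms of finite order, whereas $\sigma_n(x_i)=p_{in}x_i$ has infinite order as soon as some $p_{in}$ is not a root of unity.

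Your verification of that condition is false. A $\sigma_n$-prime ideal is stable only under the single automorphism $\sigma_n$, not under the whole torus, and it need not contain any monomial. For $n=2$ and $p_{12}=1$ one has $S'=\kk[x_1]$ and $\sigma_2=\mathrm{id}$, and $(x_1-1)$ is a $\sigma_2$-prime ideal containing no monomial. So the ideals $x_iS'$ cannot supply what is needed.

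The paper instead writes a $\sigma$-prime $I$ as $J\cap\sigma(J)\cap\cdots\cap\sigma^k(J)$ with $J$ prime \cite{GM75}. It then seeks a normal element $z\in J$ of $S'$ with $\sigma(z)\in\kk^{\times}z$, so that $S'z\subseteq I$ is a principal $\sigma$-ideal. Such a $z$ must be constructed, because an arbitrary normal $z\in J$ generating a prime fails. For example, in $\kk[x_1,x_2]$ with $\sigma(x_1)=qx_1$, $\sigma(x_2)=x_2$, $q\neq1$, take $J=(x_1,x_2)$ and $z=x_1+x_2$. Lemma \ref{lem normal action in skew poly}, which the paper cites at this step, concerns elements normal in the larger ring $S_{\mathcal{P}}$, not merely in $S'$, so that step needs the same care.

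Your fallback is the more promising route, and its first steps are right. A prime $P$ containing no $x_i$ misses the Ore set of monomials, $PT$ is a nonzero prime of the quantum torus $T$, and $PT$ is generated by central elements. But the final step, that an irreducible normal factor of $w$ lying in $P$ generates a prime ideal, is precisely the unique factorization property being proved (Proposition \ref{prop irreducible elememts in UFR} assumes it). As written, the argument is circular.

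The missing idea is to choose an element that is prime in the center rather than irreducible in $S$:
\begin{enumerate}
\item The center $Z(T)$ is a Laurent polynomial ring, hence a UFD, so the nonzero prime $PT\cap Z(T)$ contains a prime element $f$ of $Z(T)$. If $Z(T)=\kk$, then $T$ is simple and this case cannot occur.
\item $T$ is free over $Z(T)$ on a set of monomials including $1$, and ideals of $T$ are generated by central elements. Hence $I\mapsto I\cap Z(T)$ and $J\mapsto JT$ are inverse bijections between ideals of $T$ and of $Z(T)$. From this, $fT$ is prime, so $fT\cap S$ is a nonzero prime of $S$ contained in $PT\cap S=P$.
\item Choose $\alpha\in\mathbb{Z}^n$ with $g=x^{\alpha}f\in S$ and each $x_i$ of exponent $0$ in some monomial of $g$. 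Then $g$ is normal in $S$. If $gt\in S$ with $t\in T$, comparing lowest $x_i$-degree components in the domain $T$ forces $t\in S$, so $fT\cap S=gS$.
\end{enumerate}
This gives a complete direct proof bypassing \cite{CJ86}.

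The same torus argument also repairs the Ore-extension route. Run it for $S'$, apply it to the $\sigma$-stable ideal $IT'\cap Z(T')$, and split into $\sigma$-eigencomponents by a Vandermonde argument. This produces the required $\sigma$-eigen normal element in $I$ when $J$ contains no $x_i$; otherwise take $z=x_i$.
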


It is unknown whether AS-regular algebras must be prime rings. It is conjectured that noetherian AS-regular algebras are always domains\cite{ATV91}, which has been open for more than thirty years. It has been proved that all noetherian AS-regular algebras of dimension $3$ are domains \cite{ATV91,Ste96} and that all noetherian PI AS-regular algebras are domains \cite{SZ94}. The natural questions we would like to ask are
\begin{ques}\hspace*{\fill}
    \begin{enumerate}
        \item Are noetherian PI AS-regular algebras always UFRs?
        \item Are noetherian AS-regular algebras of dimension $3$ always UFRs?
    \end{enumerate}
\end{ques}

\section{Preliminaries}\label{sec pre}
Throughout $\kk$ is a fixed base field. Rings are assumed to be unital and associative. A noetherian ring means it is both a left and a right noetherian ring. All modules considered are left modules. For a ring $A$, right $A$-modules are identified with $A^o$-modules, where $A^o$ refers to the opposite ring of $A$. When a $G$-graded ring is mentioned, $G$ always refers to a ordered torsion-free abelian group (for instance,  $\mathbb{Z}^n$). 

Let $A$ be a ring. An element $a$ of $A$ is called a \textit{normal element} if $Aa=aA$. An ideal $I$ of $A$ is called a \textit{principal ideal} if it is an ideal generated by a normal element.

\begin{defn}\cite[Definition in \S 2]{CJ86}\label{def UFR}
    A ring $A$ is called a \textit{noetherian unique factorization ring} (noetherian UFR for short) if $A$ is a  noetherian prime ring such that every nonzero prime ideal of $A$ contains a nonzero principal prime ideal.
\end{defn}

A commutative noetherian domain is a noetherian UFR
in the above sense if and only if it is a UFD in the classical sense \cite[Theorem 5]{Kap74}.

\begin{rmk}\label{rmk principal}\hspace*{\fill}
    \begin{enumerate}
        \item The concept of UFR was latter extended to non-noetherian rings, see \cite[Section 4.2]{Sme16} for more details and generalizations. In this paper, we only discuss the unique factorization property of noetherian rings.
        \item In any noetherian prime ring $A$ (then every left or right regular element in $A$ is regular), an ideal $I$ of $A$ is principal if and only if there exist $a,b \in A$  such that $I=aA=Ab$ \cite[Remark 2]{CJ86}.  Hence, an ideal in a noetherian prime ring is principal as long as it is generated by one element both as a left and a right ideal.
        \item For any noetherian prime ring $A$ satisfying the descending chain condition of prime ideals (for instance, $A$ is a PI ring \cite[Proposition 13.7.15]{MR01}, or $A$ is a $\kk$-algebra with finite GK-dimension \cite[Corollary 3.16]{KL00}), $A$ is a noetherian  UFR if and only if all prime ideals of height one are  principal. 
    \end{enumerate}  
\end{rmk}

Let $X^1(A)=\{P\in\Spec A\mid \height P=1\}$,  the set of prime ideals of $A$ of height one. Recall that for a Krull domain $R$, the divisor class group $\cl(R)$ is generated by prime ideals in $X^1(R)$ \cite[Chapter II]{Fos73}. Similar fact holds for a maximal order over a Krull domain. 

\begin{defn}\cite[Definition 5.3.6]{MR01} \label{def maximal order}
    \begin{enumerate}
        \item Let $R$ be a commutative integral domain with quotient field $K$, and $Q$ be a central simple $K$-algebra. A subring $A$ of $Q$ is called an \textit{$R$-order} in $Q$ if $R \subseteq A$,  $KA=Q$, and $A$ is c-integral over $R$ (that is, for each element $a$ of $A$, $R[a]$ is contained in some finitely generated $R$-module).
        \item An $R$-order $A$ is called a \textit{maximal $R$-order} if it is not properly contained in any other $R$-orders in $Q$. 
    \end{enumerate}   
\end{defn}

\begin{rmk}\hspace*{\fill}\label{rmk different orders}
    \begin{enumerate}
        \item Definition \ref{def maximal order} is slightly different from the one given in \cite[$\S$ II.4]{LVV88}, where an $R$-order is required to be integral over $R$. By \cite[Lemma 5.3.2]{MR01} there is no difference between them when $R$ is a noetherian ring.
        \item The concept of orders can be discussed in a more general setting. Let $Q$ be an artinian simple ring. A subring $A$ of $Q$ is called a \textit{order in $Q$} if $Q$ is a two-sided classical quotient ring of $A$. Let $A$ and $A'$ be orders in $Q$. They are said to be \textit{equivalent} if there exist units $a, b, c, d \in Q$ such that $aAb\subseteq A'$ and $cA'd \subseteq A$. A \textit{maximal order} is an order in $Q$ that is maximal within its equivalent class. According to \cite[Theorem 5.3.13]{MR01}, for an $R$-order $A$ in $Q$ defined in Definition \ref{def maximal order} with $R$ being the center of $A$, $A$ is a maximal $R$-order in $Q$ if and only if $A$ is a maximal order in $Q$. 
    \end{enumerate}
\end{rmk}

\begin{ex}
    Suppose that $A$ is a prime PI ring with center $Z$.  Let $\set=Z\setminus\{0\}$. By Posner's theorem (see \cite[Theorem 13.6.5]{MR01} for example), $Q:=A_{\set}$ is a central simple algebra over $K:=Z_{\set}$. Moreover, it follows from \cite[Theorem 13.6.10]{MR01} that there exists a finitely generated free $R$-module $M$ with $A\subseteq M\subseteq Q$. Therefore, $A$ is a $Z$-order in $Q$. In fact, $A$ is a prime PI ring if and only if that $A$ is a $Z$-order in a central simple algebra \cite[Proposition 5.3.10]{MR01}. 
\end{ex}

\begin{defn}\cite[Definition 3.1.11]{MR01}
    Let $A$ be an $R$-order in $Q$. An $A$-$A$ submodule $I$ of $Q$ is called a \textit{fractional $A$-ideal} if $I$ contains a unit of $Q$, and there exist units $u,v\in Q$ such that $uI\subseteq A$ and $Iv\subseteq A$.
\end{defn}

Let $A$ be an $R$-order in $Q$, $I$ and $J$ be two fractional $A$-ideals. Let
$$(I:_lJ):=\{q\in Q\mid qJ\subseteq I\} \textrm{ and } (I:_rJ):=\{q\in Q\mid Jq\subseteq I\}.$$
When $(I:_lJ)=(I:_rJ)$, it is denoted by $(I:J)$.

If $A$ is a maximal $R$-order in $Q$, then $(I:I)=A$ and $(A:I)=\{q\in Q\mid IqI\subseteq I\}$ for any fractional $A$-ideal $I$ \cite[Proposition 5.1.8]{MR01}. It follows that $(A:I)$ is also a fractional $A$-ideal. A fractional $A$-ideal $I$ is called a \textit{reflexive fractional $A$-ideal} if $(A:(A:I))=I$. 

An $A$-module $M$ is called a \textit{reflexive $A$-module} if the canonical evaluation map $$M\to \Hom_{A^o}(\Hom_A(M,A),A), \ x \mapsto x^{**}$$
is an isomorphism, where $x^{**}(f)=f(x)$ for all $f\in \Hom_A(M,A)$.

\begin{rmk}
    By \cite[Proposition 3.1.15]{MR01} 
    $$\ (I:_rJ)\cong \Hom_A(J,I)  \textrm{ and } (I:_lJ)\cong \Hom_{A^o}(J,I).$$
    Therefore, if $A$ is a maximal $R$-order then a fractional $A$-ideal $I$ is reflexive if and only if $I$ is reflexive as an $A$-module.
\end{rmk}

\begin{thm}\cite[Theorem 2.3]{Sil68}\label{thm div group}
    Let $A$ be a maximal $R$-order, where $R$ is a Krull domain. The set of all reflexive fractional $A$-ideals is a group with the product given by $I* J:=(A:(A:IJ))$, which is denoted by $\di(A)$. Moreover $\di(A)$ is a free abelian group generated by $X^1(A)$. 
\end{thm}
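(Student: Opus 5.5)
The statement to prove is Silver's Theorem \ref{thm div group}. I will reduce it to the structure of the Krull domain $R$ by localizing at height-one primes of $R$, following the commutative argument for $\cl(R)$.

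\textbf{Step 1: the group structure.} First I show that $*$ is well defined and associative. For fractional ideals $I,J$, the product $IJ$ is again a fractional $A$-ideal, and $(A:(A:-))$ is a closure operation: it is inclusion preserving, idempotent, and satisfies $(A:(A:IJ)) = (A:(A:I^{**}J^{**}))$, where $I^{**}:=(A:(A:I))$. Hence $\di(A)$ is a monoid with identity $A$. Commutativity and the existence of inverses are the substantive points; the candidate inverse of $I$ is $(A:I)$. To see that $I*(A:I)=A$, it suffices to check this after localizing at each height-one prime $\mathfrak p$ of $R$. Here I use two facts:
\begin{itemize}
\item a reflexive fractional ideal is the intersection of its localizations $I_{\mathfrak p}$ over $\mathfrak p\in X^1(R)$, since $R=\bigcap R_{\mathfrak p}$ and reflexive lattices satisfy the analogous intersection property;
\item $A_{\mathfrak p}$ is a maximal order over the DVR $R_{\mathfrak p}$.
\end{itemize}

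\textbf{Step 2: the local case.} Over a DVR, a maximal order $A_{\mathfrak p}$ is hereditary and has a unique maximal ideal $M$, which is invertible. Every fractional ideal is then $M^n$ for some $n\in\mathbb Z$; this is the classical Auslander--Goldman / Reiner theory of maximal orders over DVRs. So the fractional ideals of $A_{\mathfrak p}$ form the group $\mathbb Z$. This yields inverses and commutativity locally, and hence globally via the intersection description.

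\textbf{Step 3: freeness.} Each reflexive $I$ has $I_{\mathfrak p}=A_{\mathfrak p}$ for all but finitely many $\mathfrak p$. This holds because $uI\subseteq A\subseteq vI$ for suitable units $u,v$, and a nonzero element of a Krull domain lies in only finitely many height-one primes; the units may be taken central after clearing denominators. Define
\[
\di(A)\to\bigoplus_{\mathfrak p\in X^1(R)}\mathbb Z,\qquad I\mapsto (n_{\mathfrak p}(I))_{\mathfrak p},
\]
where $I_{\mathfrak p}=M_{\mathfrak p}^{n_{\mathfrak p}(I)}$. This map is a homomorphism, and it is injective by the intersection property. It is surjective because for each $\mathfrak p$ the ideal $P:=A\cap M_{\mathfrak p}$ is reflexive, localizes to $M_{\mathfrak p}$ at $\mathfrak p$, and equals $A$ at every other height-one prime. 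Finally, I check that $P\mapsto P\cap R$ gives a bijection $X^1(A)\to X^1(R)$ via lying over for the integral extension $R\subseteq A$, and that $P$ above is exactly the height-one prime of $A$ over $\mathfrak p$. It follows that $\di(A)$ is free abelian on $X^1(A)$.

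The main obstacle is Step 1 together with Step 2: establishing that a maximal order over a DVR is hereditary with invertible Jacobson radical, and that reflexivity is detected and reconstructed by height-one localizations for a maximal order over a Krull domain. I would prove the latter first, since $(A:I)$ is an $R$-lattice and $R=\bigcap_{\mathfrak p}R_{\mathfrak p}$. For the former I would cite the standard results on maximal orders over DVRs rather than reprove them.
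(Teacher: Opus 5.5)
The paper does not prove this statement; it imports it from Silver, noting that Fossum and Le Bruyn--Van den Bergh--Van Oystaeyen prove it for tame orders. Your outline is the standard localization argument behind those references. You pass to the height-one localizations $A_{\mathfrak p}$, which are maximal orders over the DVRs $R_{\mathfrak p}$ whose fractional ideals form the infinite cyclic group generated by the radical $M_{\mathfrak p}$. You then glue back via the intersection description of reflexive ideals. The argument is sound, and for maximal orders in a \emph{central} simple algebra it gives the sharper statement $\di(A)\cong\bigoplus_{X^1(R)}\mathbb Z$. The tame-order versions are more general precisely because they do not route through $X^1(R)$: a hereditary $A_{\mathfrak p}$ may have several maximal ideals.

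A few steps need more than you wrote.

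(i) $A=\bigcap_{\mathfrak p}A_{\mathfrak p}$ does not follow from $R=\bigcap_{\mathfrak p}R_{\mathfrak p}$ alone. It uses maximality: $\bigcap_{\mathfrak p}A_{\mathfrak p}$ is an $R$-order containing $A$, since it lies in a finitely generated free $R$-module (e.g.\ by the reduced trace form). The same gluing shows that $A_{\mathfrak p}$ is maximal when $R$ is not noetherian. In that case you also need an approximation argument for $(A:X)_{\mathfrak p}=(A_{\mathfrak p}:X_{\mathfrak p})$ when $X$ is not finitely generated. For noetherian $R$, the case used in the paper, this is automatic.

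(ii) Clearing denominators gives a central $d\neq 0$ with $dI\subseteq A$. The lower bound $cA\subseteq I$, however, requires a nonzero central element of the nonzero ideal $I\cap A$ (Rowen's theorem, or a reduced norm).

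(iii) Lying over does not by itself give the bijection $X^1(A)\to X^1(R)$. It gives a prime $A\cap M_{\mathfrak p}$ over each $\mathfrak p\in X^1(R)$. This prime is unique since $A_{\mathfrak p}$ has a unique maximal ideal, and it has height one by incomparability together with the fact that nonzero ideals of $A$ meet $R$. The converse, that every $P\in X^1(A)$ contracts to a height-one prime of $R$, needs going down. Alternatively, use this argument from your Step 3: for $0\neq c\in P\cap R$, injectivity gives $cA=(P_1^{n_1}\cdots P_k^{n_k})^{**}$ with $P_i=A\cap M_{\mathfrak p_i}$ and $n_i\geqslant 0$. So the ordinary product $P_1^{n_1}\cdots P_k^{n_k}$ lies in $cA\subseteq P$, and primeness with $\height P=1$ forces $P=P_i$ for some $i$. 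Without this, $X^1(A)$ could a priori contain primes outside your basis.

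With these additions the proof is complete.
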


\begin{rmk} A more general form of
    Theorem \ref{thm div group} is proved for tame $R$-orders in \cite[Proposition 2.3]{Fos68} and \cite[Theorem II.4.9]{LVV88}. 
    In fact,  by \cite[Proposition 1.3]{Fos68} and \cite[Theorem 21.4]{Rei03}, if $R$ is a Krull domain, maximal $R$-orders are automatically tame $R$-orders. 
\end{rmk}

\begin{rmk}
    Let $A$ and $R$ be as in Theorem \ref{thm div group}. For a fractional $A$-ideal $I$, let $I^{-1}=(A:I)$. Then $I^{-1}*I=I*I^{-1}=A$.
\end{rmk}

Let $\npr(A)=\{Ax\mid 0\neq x\in Q, Ax=xA\}$. It is direct to verify that $\npr(A)$ is a subgroup of $\di(A)$. Similarly, let $\cpr(A)=\{Ak\mid 0\neq k\in K\}$.

The \textit{normal class group} of $A$ is defined as $\ncl(A)=\di(A)/\npr(A)$; the \textit{central class group} of $A$ is  defined as $\ccl(A)=\di(A)/\cpr(A)$. Obviously, $\cpr(A)$ is a subgroup of $\npr(A)$, and consequently $\ncl(A)$ is a quotient group of $\ccl(A)$.

Throughout this paper, the following hypothesis is frequently assumed.
\begin{equation}\label{hypo}\tag{\textbf{H}}  
    \begin{split}
        & \quad \textit{$A$ is a prime PI ring with a noetherian normal center $Z$ such that }\\
        & \textit{$A$ is a maximal $Z$-order}.
    \end{split}
\end{equation}

It is a noteworthy that if $A$ satisfies Hypothesis \eqref{hypo} then $A$ is a finitely generated module over $Z$, hence also a noetherian ring\cite[Corollary 13.6.14]{MR01}.

As in the commutative case, $\ncl(A)$ measures the lacking of unique factorization property of $A$. Keeping Remark \ref{rmk different orders} in mind, the following theorem follows immediately from \cite[Theorem 4.6]{Aka09}, as any noetherian PI prime ring has enough invertible ideals.

\begin{thm}\label{Aka's result} 
    Suppose that $A$ satisfies Hypothesis \eqref{hypo}. Then $A$ is a noetherian UFR if and only if $\ncl(A)=\{1\}$.
\end{thm}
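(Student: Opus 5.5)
We must prove Theorem \ref{Aka's result}: under Hypothesis \eqref{hypo}, $A$ is a noetherian UFR if and only if $\ncl(A)=\{1\}$. By Theorem \ref{thm div group}, $\di(A)$ is free abelian on $X^1(A)$, and since $A$ is PI it satisfies the descending chain condition on primes. By Remark \ref{rmk principal}(3), $A$ is a noetherian UFR exactly when every $P\in X^1(A)$ is principal. So both directions reduce to comparing ``every height-one prime is principal'' with ``$\di(A)=\npr(A)$''.

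($\Leftarrow$) Assume $\ncl(A)=\{1\}$ and let $P\in X^1(A)$. Then $P\in\di(A)=\npr(A)$, so $P=Ax=xA$ for some nonzero $x\in Q$. Since $1\in A$, we get $x\in P\subseteq A$, so $x$ is a normal element of $A$ and $P$ is a principal ideal. By Remark \ref{rmk principal}(3), $A$ is a noetherian UFR.

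($\Rightarrow$) Assume $A$ is a noetherian UFR, so each $P\in X^1(A)$ equals $Ax_P=x_PA$ with $x_P\in A$ normal. Because $\di(A)$ is generated by $X^1(A)$, it suffices to check that $\npr(A)$ is a subgroup of $\di(A)$ containing every generator; then $\di(A)=\npr(A)$.

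\begin{itemize}
\item \emph{$\npr(A)\subseteq\di(A)$.} For $Ax$ with $Ax=xA$, the inverse is $(A:Ax)=x^{-1}A=Ax^{-1}$. Hence $(A:(A:Ax))=Ax$, so $Ax$ is reflexive.
\item \emph{Products.} For normal $x,y$ we have $AxAy=Axy$, and this ideal is already reflexive, so $Ax*Ay=Axy\in\npr(A)$.
\item \emph{Inverses.} $(Ax)^{-1}=Ax^{-1}\in\npr(A)$.
\end{itemize}

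So $\npr(A)$ is a subgroup containing $X^1(A)$, which gives $\di(A)=\npr(A)$, that is, $\ncl(A)=\{1\}$.

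The only step needing care is the claim $(A:Ax)=Ax^{-1}$. We have $q\,Ax\subseteq A$ if and only if $qA\subseteq Ax^{-1}=x^{-1}A$, and this holds if and only if $q\in x^{-1}A$. The right-hand colon ideal is computed the same way, so the left and right colon ideals agree and the expression $(A:Ax)$ is well defined.

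This reduction relies on the maximal-order setting and Theorem \ref{thm div group}. Alternatively, one can cite \cite[Theorem 4.6]{Aka09} directly: prime PI noetherian rings have enough invertible ideals, and Remark \ref{rmk different orders} identifies maximal $Z$-orders with maximal orders.
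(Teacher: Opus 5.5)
Your proof is correct, but it takes a different route from the paper. The paper gives no internal argument: it deduces the statement from \cite[Theorem 4.6]{Aka09}, using Remark \ref{rmk different orders} to match maximal $Z$-orders with maximal orders and the fact that noetherian prime PI rings have enough invertible ideals. That is the alternative you mention in your last sentence. You instead reduce everything to Theorem \ref{thm div group} and Remark \ref{rmk principal}(3). The equivalence then rests on three facts: $\di(A)$ is generated by $X^1(A)$; $\npr(A)$ is a subgroup of $\di(A)$, which the paper calls ``direct to verify'' and you actually check; and $P=Ax$ with $x\in Q$ forces $x\in P\subseteq A$. Your route keeps the theorem self-contained within the paper's own preliminaries and shows that, once Silver's structure theorem is available, the statement is elementary. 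The citation buys brevity and places the result in Akalan's more general framework of maximal orders with enough invertible ideals, which is not needed here.

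Two details should be stated explicitly. First, Theorem \ref{thm div group} applies because a noetherian normal domain is a Krull domain, and it is also what guarantees $X^1(A)\subseteq\di(A)$, which your ($\Leftarrow$) direction uses. Second, any nonzero $x\in Q$ with $Ax=xA$ is a unit of $Q$, because $Qx=xQ$ is then a nonzero two-sided ideal of the simple ring $Q$; this justifies writing $x^{-1}$.
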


Let $A$ be a $G$-graded ring. Then $X_{gr}^1(A)$, $\di^{gr}(A)$, $\npr^{gr}(A)$, $\cpr^{gr}(A)$, $\ncl^{gr}(A)$ and $\ccl^{gr}(A)$ can be defined similarly by considering the corresponding graded objects.

\section{Unique factorization property of graded algebras}
In this section we prove that for some connected graded $\kk$-algebra $A$, $X^1_{gr}(A)$ controls the unique factorization property of $A$. Class groups are used to measure the lack of unique
factorization property of Krull domains in commutative algebra and number theory. In the noncommutative context, normal class groups or central class groups are similarly used, including \cite{LeB84,LeB84_2,LV86,LVV88, Aka09}. We first prove that for any graded domain satisfying Hypothesis \eqref{hypo} its normal class group is isomorphic to its graded normal class group (Proposition \ref{prop graded}). Then we prove that such kind of connected graded algebra is a noetherian UFR if and only if all of its height-one graded prime ideals are left and right projective (Corollary \ref{cor graded}), which is a key step to prove the main result Theorem \ref{thm 3-dim UFR} in this paper.

\begin{prop}\label{prop graded}
    Let $A$ be a $G$-graded ring which is also a domain. If $A$ satisfies Hypothesis \eqref{hypo}, then $\ncl(A)\cong \ncl^{gr}(A)$.
\end{prop}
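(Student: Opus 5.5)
We prove the statement by comparing the two class groups through the natural map
\[\phi:\di^{gr}(A)\to \di(A)\]
and showing that it induces an isomorphism $\ncl^{gr}(A)\to\ncl(A)$. Here $\di^{gr}(A)$ is the free abelian group on $X^1_{gr}(A)$, the graded height-one primes, whose height is computed among graded primes. The map is defined by sending a graded height-one prime $P$ to the prime $P$ itself, viewed as a reflexive fractional ideal. The plan has three steps: first make sure $\phi$ is well defined and injective, then prove surjectivity modulo $\npr(A)$, and finally identify the kernel of the induced map with $\npr^{gr}(A)$.

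\textbf{Step 1: $\phi$ is well defined and injective.} We need that a graded prime which is minimal among nonzero graded primes is actually of height one. Since $G$ is ordered and torsion-free, for any prime $P$ the largest graded ideal $P^{g}$ contained in $P$ is again prime. So a nonzero prime contained in a graded height-one prime $P$ has nonzero graded part equal to $P$; it must therefore equal $P$, and $\height P=1$. Moreover the $*$-product of graded reflexive ideals is graded, because $(A:I)$ is graded whenever $I$ is. Hence $\phi$ is the inclusion of the subgroup of $\di(A)$ generated by the graded members of $X^1(A)$. Since $\di(A)$ is free on $X^1(A)$ by Theorem \ref{thm div group}, $\phi$ is injective.

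\textbf{Step 2: surjectivity modulo $\npr(A)$.} This is the main obstacle: we must show that every $P\in X^1(A)$ is equivalent, modulo $\npr(A)$, to a graded divisor. We split into two cases.

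In the first case $P^{g}\neq 0$. Then $P^{g}$ is a nonzero prime inside $P$, so $P^{g}=P$ by height one, and $P$ is already graded.

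In the second case $P^{g}=0$. Here we localize at the set $\mathcal{C}$ of nonzero homogeneous elements. Because $A$ is a graded domain that is noetherian and PI, $\mathcal{C}$ is an Ore set, and $B=A_{\mathcal{C}}$ is a graded division-type ring. By the standard structure theorem for such rings, $B\cong D[t,t^{-1};\sigma]$ when $G=\mathbb{Z}$; for general $G$ it is a twisted group ring $D*H$ with $H$ free abelian. The ideal $P$ survives in $B$, since it contains no homogeneous elements. The key is that height-one primes of $B$ are principal, generated by normal elements. For $G=\mathbb{Z}$ this holds because $D[t^{\pm1};\sigma]$ is a noetherian PI principal ideal ring in the appropriate sense. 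For $G=\mathbb{Z}^n$ we would reduce to this case, or argue via the center of $B$, which is a graded-field Laurent ring and hence a UFD in the commutative sense, together with the maximal order property of $B$.

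So we write $PB=xB=Bx$ with $x$ normal in $B$. Clearing homogeneous denominators, $x$ can be taken in $A$ and normal in $Q$ with $Ax=xA$ as fractional ideals; we expect this needs care, possibly replacing $x$ by a suitable central power or norm. Then the divisor of $Ax$ in $\di(A)$ equals $P$ plus a combination of height-one primes meeting $\mathcal{C}$. Those primes are exactly the graded ones by Case 1. Therefore $P\equiv$ (graded divisor) mod $\npr(A)$.

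\textbf{Step 3: the kernel.} Finally we show that $\phi^{-1}(\npr(A))=\npr^{gr}(A)$. Suppose a graded reflexive ideal $I$ satisfies $I=Ax=xA$ with $x\in Q$. Since $A$ is a graded domain, we compare leading components with respect to the order on $G$. Writing $x=a^{-1}b$ with a homogeneous denominator, the relation $Ax=I$ graded forces $x$ to be homogeneous: its highest and lowest components both generate $I$, and they must coincide. Hence the divisor of $I$ lies in $\npr^{gr}(A)$. Combining the three steps, $\phi$ induces the isomorphism $\ncl^{gr}(A)\cong\ncl(A)$.
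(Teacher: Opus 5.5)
The genuine gap is in Step~2, at the point where you write ``we expect this needs care''. Knowing $PB=xB=Bx$ with $x$ normal in $B=A_{\mathcal C}$ does not yet give you an element of $\npr(A)$. For a central denominator $s$, the equality $A(sx)=(sx)A$ amounts to $\eta_x(A)=A$, where $xb=\eta_x(b)x$ on $B$, and nothing forces the automorphism $\eta_x$ of $B$ to preserve $A$. Replacing $x$ by a power or by its reduced norm replaces $PB$ by a power of it, so at best you would show that $\ncl(A)$ modulo the image of $\ncl^{gr}(A)$ is torsion.

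The missing idea, which is the core of the paper's proof, is that every normal element of $B$ is a homogeneous unit times an element of $K=Z(Q)$. Write $x=x_1+\cdots+x_n$ with $\deg x_1<\cdots<\deg x_n$. For homogeneous $a$, comparing the extreme components of $xa=\eta_x(a)x$ in the graded domain $B$ shows that $\eta_x(a)$ is homogeneous of degree $\deg a$, and then $x_ia=\eta_x(a)x_i$ for every $i$. Since $x_1$ is a unit of $B$, the element $x_1^{-1}x$ commutes with all of $B$, hence lies in $K$. Thus $PB=B\,x_1^{-1}x$, and $A\,x_1^{-1}x\in\cpr(A)\subseteq\npr(A)$ is exactly the principal divisor your Step~2 needs.

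There is a second gap. The proposition allows an arbitrary ordered torsion-free $G$, and there your claim that height-one primes of $B$ have normal generators is not established. For $G=\mathbb{Z}$ your argument works and differs from the paper's: $D[t^{\pm1};\sigma]$ is a left and right principal ideal domain, so every ideal is generated by a normal element by Remark~\ref{rmk principal}(2). But ``UFD center plus maximal order'' does not by itself give this. Height-one primes of a maximal order over a UFD need not be extended from the center; the prime over $2$ in the Hurwitz order is an example.

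The ingredient you are missing is that $B$ (which equals $A_{\set}$ by the graded Posner theorem) is Azumaya over the UFD $Z_{\set}$. Hence $\ccl(B)\cong\cl(Z_{\set})=\{1\}$, which gives a generator in $K$ for every height-one prime of $B$ and closes both gaps at once.

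The rest of your outline is sound. Step~3 is right in substance: comparing extreme degrees in $x_{\min}=ax$ with $a\in A$ makes it rigorous, and it proves the identity $\di^{gr}(A)\cap\npr(A)=\npr^{gr}(A)$ that the paper only notes. The loose point in Step~1 is that a nonzero prime inside $P$ might have zero graded part. It disappears if $X^1_{gr}(A)$ is taken, as in the paper, to be the graded members of $X^1(A)$.
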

\begin{proof}
    Let $\set$ be a multiplicatively closed subset of $Z\setminus \{0\}$. Then $A_{\set}$ is a $Z_{\set}$-order in $Q$ such that the quotient field $K$ of $Z$ is also the quotient field of $Z_{\set}$. Moreover,  $A_{\set}$ is a maximal $Z_{\set}$-order in $Q$ \cite[Corollary II.4.12]{LVV88}.
 
    Obviously, $\Spec(A_{\set})=\{P_{\set}\mid P\in\Spec A, P\cap \set=\varnothing\}$. Therefore, the homomorphism $\phi: \di(A)\to \di(A_{\set}), I \mapsto I_{\set}$ is surjective, as $\phi(P) = P_{\set}$ for all $P\in X^1(A)$. Obviously, $\phi(\npr(A)) \subseteq \npr(A_{\set})$. As a consequence, $\phi$ induces a surjection 
    $$\bar{\phi}: \ncl(A)\to \ncl(A_{\set}).$$ 
    
    Now, let $\set$ be the set of homogeneous elements of $Z\setminus\{0\}$. We claim that $$\phi(\npr(A))= \npr(A_{\set}).$$

    Suppose $x\in Q$ such that $xA_{\set}=A_{\set}x$. Then $x$ is regular, and $x$ induces an algebra automorphism $\eta_x: A_{\set} \to A_{\set}$ given by $xa=\eta_x(a)x$. We may assume that $x\in A$ (otherwise, $rx\in A$ for some $r\in S$, we replace $x$ by $rx$). Let $x=\sum_{i=1}^n x_i$ where $x_1,\cdots, x_n$ are homogeneous element of $A$ such that $\deg(x_1)<\cdots<\deg(x_n)$ as $A$ is $G$-graded. Note that $A_{\set}$ is also a $G$-graded ring with $\deg(a/s):=\deg(a) \deg(s)^{-1}$ for any homogeneous element $a \in A$ and $s \in \set$. For any homogeneous element $a\in A_{\set}$, it follows from the equality
    $$(x_1+\cdots+x_n)a=\eta_x(a)(x_1+\cdots +x_n)$$
    and $A$ is a domain that $\eta_x(a)$ is a homogeneous element in $A_{\set}$ of degree $\deg(a)$. Moreover, $x_i a=\eta_x(a)x_i$ for any $a\in A_{\set}$ and $1\leqslant i\leqslant n$. In particular, $x_1$ is a homogeneous element of $A$ such that $x_1A_{\set}=A_{\set}x_1$ and  $\eta_{x_1}=\eta_x$. Since $A$ is a $G$-graded domain, by the graded version of Posner's theorem \cite[Theorem C.I.2.8]{NV82}, $A_{\set}$ is a $G$-graded division ring, that is, each homogeneous element is invertible. It follows that $x_1^{-1}$ is a homogeneous element of $A_{\set}$ such that $x_1^{-1}A_{\set}=A_{\set}x_1^{-1}$, and $\eta_{x_1^{-1}}=\eta_{x_1}^{-1}$. 
    Consequently, for any $a\in A_{\set}$,  
    $$x_1^{-1}xa=x_1^{-1}\eta_{x}(a)x=\eta_{x_1^{-1}}(\eta_x(a))x^{-1}x=\eta_{x_1}^{-1}(\eta_x(a))x^{-1}x = ax_1^{-1}x.$$
    Therefore, $x_1^{-1}xA=Ax_1^{-1}x$. In other words,  $Ax_1^{-1}x\in \npr(A)$. Hence $A_{\set}x=A_{\set}x_1x_1^{-1}x=A_{\set}x_1^{-1}x=\phi(Ax_1^{-1}x)$, which proves the claim.

    Then we have the following exact commutative diagram.
    \begin{center}
    \begin{tikzcd}[column sep = small, row sep = 1.5em]

    & 1\arrow[d]   &1\arrow[d]   &1\arrow[d]\\
1 \arrow[r] & \ker\phi|_{\npr(A)} \arrow[r] \arrow[d] & \ker\phi \arrow[r] \arrow[d] & \ker\bar{\phi}\arrow[r]\arrow[d]& 1\\

1 \arrow[r] & \npr(A) \arrow[r] \arrow[d, "\phi|_{\npr(A)}"] & \di(A) \arrow[r] \arrow[d, "\phi"] & \ncl(A) \arrow[r] \arrow[d, "\bar{\phi}"] & 1 \\
1 \arrow[r] & \npr(A_{\set}) \arrow[r] \arrow[d]             & \di(A_{\set}) \arrow[r] \arrow[d]  & \ncl(A_{\set}) \arrow[r] \arrow[d]        & 1 \\
            & 1                                              & 1                                  & 1                                         &  
\end{tikzcd}
\end{center}
Hence  
$\ker\bar{\phi}\cong \ker\phi/\ker\phi|_{\npr(A)}.$

Now, suppose $P\in X^1(A)$. Consider the ideal $P^g$ of $A$ generated by the homogeneous elements of $P$, which is also a prime ideal of $A$. It follows that either $P^g=P$ or $P^g=0$. Therefore, $\ker\phi$ is generated by $$\{P\in X^1(A)\mid P\cap\set\neq\varnothing\}= X_{gr}^1(A),$$ which implies that $\ker\phi=\di^{gr}(A)$. Note $\di^{gr}(A)\cap \npr(A)=\npr^{gr}(A)$. Therefore,
    $$\ker\phi/\ker\phi|_{\npr(A)}=\ker\phi/\ker\phi\cap\npr(A)=\di^{gr}(A)/\npr^{gr}(A).$$
It follows that  $\ker\bar{\phi}\cong \ncl^{gr}(A)$.

On the other hand, by \cite[Proposition I.1.1]{LVV88}, $Z_{\set}$ is a commutative UFD. Since $A_{\set}$ is an Azumaya algebra over $Z_{\set}$ \cite[Proposition II.3.4]{LVV88}, it follows from \cite[Theorem II.4.24]{LVV88} that $\ccl(A_{\set}) \cong \cl(Z_{\set})\cong \{1\}$. So $\ncl(A_{\set})=\{1\}$, which implies that $\ncl(A)\cong \ker\bar\phi$. 

It follows that $\ncl(A)\cong \ncl^{gr}(A)$.
\end{proof}

\begin{cor} \label{cor graded}
    Suppose that $A$ is a connected graded $\kk$-algebra satisfying Hypothesis \eqref{hypo}, and $A$ is a domain. Then $A$ is a noetherian UFR if and only if every $P\in X_{gr}^1(A)$ is projective both as a left and a right $A$-module.
\end{cor}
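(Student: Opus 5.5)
By Theorem \ref{Aka's result} and Proposition \ref{prop graded}, $A$ is a noetherian UFR if and only if $\ncl^{gr}(A)=\{1\}$. By Theorem \ref{thm div group}, $\di^{gr}(A)$ is free abelian on $X^1_{gr}(A)$. So the corollary reduces to the following claim: every $P\in X^1_{gr}(A)$ lies in $\npr(A)$, i.e.\ $P=Ax=xA$ for a normal element $x$, if and only if $P$ is projective on both sides. I will prove the two directions separately.

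\emph{UFR implies projective.} Suppose $A$ is a noetherian UFR and $P\in X^1_{gr}(A)$. By Remark \ref{rmk principal}(3), $P$ is principal, so $P=Ax=xA$ with $x$ normal. Since $A$ is a domain, right multiplication by $x$ gives an isomorphism $A\to Ax=P$ of left modules, and left multiplication gives $A\cong xA=P$ as right modules. Hence $P$ is free of rank one on both sides, in particular projective.

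\emph{Projective implies UFR.} Let $P\in X^1_{gr}(A)$ be left and right projective. The goal is to show $P$ is generated by a single homogeneous normal element; then $P\in\npr(A)$, and since these $P$ generate $\di^{gr}(A)$, we get $\ncl^{gr}(A)=\{1\}$. The argument has three steps.
\begin{enumerate}
\item $P$ is a finitely generated graded projective module over the connected graded algebra $A$, so by the graded Nakayama lemma it is graded free, $P\cong\bigoplus_i A(-d_i)$, on both sides.
\item Compute the rank $n$ of this free module. Localize at $\set=Z\setminus\{0\}$: $P_{\set}=Q$, since $P$ is a nonzero ideal of a prime PI ring and hence meets $Z$ nontrivially. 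Comparing lengths over the simple artinian ring $Q$ gives $Q^n\cong Q$, so $n=1$.
\item Conclude. We now have $P=Ax$ with $x$ homogeneous, and symmetrically $P=yA$ with $y$ homogeneous. By Remark \ref{rmk principal}(2), $P$ is then a principal ideal, hence $P\in\npr^{gr}(A)$.
\end{enumerate}

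The main obstacle is the rank computation in step 2. The issue is that $Q$ need not be a division ring: $Q\cong M_m(D)$, and the correct comparison is of composition lengths, which gives $n\cdot m=m$ and hence $n=1$. An alternative route is to compare GK-dimension or Hilbert series multiplicities: $A/P$ has strictly smaller GK-dimension than $A$, which forces the Hilbert series of $P$ and $A$ to have the same leading multiplicity, so a graded free module of rank $n$ must have $n=1$. Either route completes the proof.
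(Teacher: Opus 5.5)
Your proposal is correct and has the same structure as the paper's proof. You reduce via Theorem~\ref{Aka's result} and Proposition~\ref{prop graded} to showing that a $P\in X^1_{gr}(A)$ projective on both sides is principal, use that finitely generated graded projectives over a connected graded algebra are free, prove the rank is one on each side, and conclude with Remark~\ref{rmk principal}(2).

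The only difference is the rank-one step. The paper argues that free generators of a left ideal in a noetherian prime ring are regular, and any two regular elements $a,b$ satisfy $Aa\cap Ab\neq 0$ by the Ore condition. You instead localize at $Z\setminus\{0\}$ and use $P\cap Z\neq 0$ to get $Q^n\cong Q$. Since $A$ is a domain, $Q$ is actually a division ring, so your concern about $Q\cong M_m(D)$ does not arise. You also spell out the easy converse direction, which the paper leaves implicit.
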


\begin{proof}
    By Theorem \ref{Aka's result} and Proposition \ref{prop graded}, it suffices to prove that any $P\in X_{gr}^1(A)$ such that both $P_A$ and $_AP$ are projective is a principal ideal.
     
    Since $A$ is connected graded, any finitely generated graded projective $A$-module is free of finite rank. According to Remark \ref{rmk principal} (2), it suffices to show that $P$ is of rank $1$ on both sides. This follows from the following general fact.

    Any left ideal $I$ in a noetherian prime ring $B$ such that ${}_B I$ is free is free of rank $1$. Note that the free generators of ${}_B I$  must be regular elements of $B$. If $a,b\in I$ are regular elements of $B$, then $a,b$ are invertible elements in $Q$, where $Q$ is the left classical quotient ring of $B$. Hence $Qa=Qb=Q$. As a result $Ba\cap Bb\neq 0$. 
\end{proof}

Let $A$ be a ring, $M$ be an $A$-$A$ bimodule. For any $\sigma,\tau\in \Aut(A)$, ${}^{\sigma}M^{\tau}$ is the $A$-$A$ bimodule with the same additive group as $M$, and  the new action $a\cdot x\cdot b:=\sigma(a)x\tau(b)$ for any $x \in M$, $a ,b\in A$. If $A$ is moreover a connected graded $\kk$-algebra, then any graded invertible $A$-$A$ bimodule must be of form ${}^1A^{\sigma}$ for some $\sigma\in\Aut_{\kk}(A)$ \cite[Lemma 1.7]{RRZ14}. Invertible $A$-$A$ bimodules (which are not graded) usually are more complicated. The central symmetric invertible $A$-$A$ bimodules over a connected graded $\kk$-algebra satisfying Hypothesis \eqref{hypo} are in fact quite clear (see Proposition \ref{prop pic-oz}). 

We consider those invertible $A$-$A$ bimodules $U$ which are \textit{symmetric} over $Z$, that is, $zx=xz$ for all $z\in Z$ and $x\in U$. 
The isomorphic classes of invertible $A$-$A$ bimodules which are symmetric over $Z$ constitute a subgroup of the Picard group $\Pic(A)$. This subgroup is called the \textit{central Picard group} of $A$, denoted by $\Picent(A)$. 

Chan, Gaddis, Won and Zhang \cite{CGWZ25} studied the \textit{ozone group} of $A$, which consists of all the automorphisms of $A$ that fix the center $Z$. Namely,  
\[\Oz(A):=\{\sigma\in\Aut_{\kk}(A)\mid \sigma(z)=z,\forall z\in Z\}.\]
For any  $\sigma\in\Oz(A)$,  ${}^1A^{\sigma}$ is an invertible $A$-bimodule symmetric over $Z$. Therefore, there is a map $\Oz(A)\to \Picent(A), \sigma \mapsto [{}^1A^{\sigma}]$. If $A$ is a connected graded $\kk$-algebra which is a domain, then there are no nontrivial invertible elements in $A$, and consequently there are no nontrivial inner automorphisms of $A$, which implies that the above map is injective. We claim that it is also surjective.

\begin{prop}\label{prop pic-oz}
    Let $A$ be a connected graded $\kk$-algebra satisfying Hypothesis \eqref{hypo}. Then $\Picent(A)\cong \Oz(A)$.
\end{prop}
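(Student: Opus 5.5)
The map $\Oz(A)\to\Picent(A)$, $\sigma\mapsto[{}^1A^{\sigma}]$, is already a group homomorphism and is injective (the domain hypothesis follows from Hypothesis \eqref{hypo}, since a prime PI connected graded algebra that is a maximal order has no nontrivial units, as discussed before the statement). So it remains to prove surjectivity. The plan is to realize an arbitrary $[U]\in\Picent(A)$ as an invertible fractional $A$-ideal, use Proposition \ref{prop graded} to move that ideal into the graded world, and then apply \cite[Lemma 1.7]{RRZ14}.

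\textbf{Step 1: embed $U$ in $Q$.} Let $U$ be an invertible $A$-$A$ bimodule symmetric over $Z$, and put $\set=Z\setminus\{0\}$. Since $U$ is symmetric over $Z$, we have $Q\otimes_A U\cong U_{\set}\cong U\otimes_A Q$, and this is an invertible $Q$-$Q$ bimodule symmetric over $K$. Because $Q$ is central simple over $K$, any such bimodule is a module over $Q\otimes_K Q^o$, which is a full matrix algebra over $K$. A dimension count then gives $U_{\set}\cong Q$ as bimodules.

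As $U$ is projective on each side, it is torsion-free, so $U\to U_{\set}$ is injective. Hence $U$ is isomorphic to an $A$-$A$ sub-bimodule $I\subseteq Q$. Finite generation of $U$ on both sides gives a common central denominator, so $I$ is a fractional $A$-ideal. Invertibility of $U$ translates into $I(A:I)=(A:I)I=A$. In particular $I=(A:(A:I))$, so $I\in\di(A)$.

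\textbf{Step 2: reduce to a graded ideal.} Now let $\set$ be the set of nonzero homogeneous central elements. By the proof of Proposition \ref{prop graded}, $\ccl(A_{\set})=\{1\}$, so $I_{\set}=A_{\set}k$ for some $0\neq k\in K$. Replacing $I$ by $k^{-1}I$ changes neither the bimodule isomorphism class nor symmetry over $Z$, since $k$ is central. After this replacement $I_{\set}=A_{\set}$, so $I\in\ker\phi$.

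The proof of Proposition \ref{prop graded} shows $\ker\phi=\di^{gr}(A)$, so $I$ is a $*$-product of graded height-one primes and their inverses. The operations $(A:-)$ and products preserve gradedness, because colon ideals of graded fractional ideals are graded. Hence $I$ is a graded invertible fractional ideal, that is, a graded invertible bimodule (after a degree shift if necessary).

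\textbf{Step 3: conclude.} By \cite[Lemma 1.7]{RRZ14}, $I\cong{}^1A^{\sigma}(n)$ for some $\sigma\in\Aut_{\kk}(A)$. Explicitly, $I=Ax=xA$ for a homogeneous normal $x$, and $\sigma$ is determined by $ax=x\sigma(a)$. Since $I$ is symmetric over $Z$, we get $zx=xz$, hence $\sigma(z)=z$ for all $z\in Z$, so $\sigma\in\Oz(A)$. Thus $[U]=[{}^1A^{\sigma}]$, which proves surjectivity.

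The main obstacle is Step 2: checking that an element of $\ker\phi$, defined through reflexive $*$-products, is genuinely a graded fractional ideal. This should follow from the fact that the generators in $X^1_{gr}(A)$ are graded and that $(A:J)$ is graded whenever $J$ is.
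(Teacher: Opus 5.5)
Your surjectivity argument follows the paper's proof. You realize $U$ as a reflexive fractional ideal $I$, use $\ccl(A_{\set})=\{1\}$ for $\set$ the nonzero homogeneous central elements to move $I$ (up to a central factor $k\in K$) into $\ker\phi=\di^{gr}(A)$, and finish with \cite[Lemma 1.7]{RRZ14}. There are three differences. You replace the citation \cite[Lemma 3.2]{GM94} by the direct computation $U_{\set}\cong Q$ over the matrix algebra $Q\otimes_K Q^{o}$. You rescale by $k^{-1}$ directly instead of passing through $\Picent(A)\subseteq\ccl(A)$ and $\ker\bar\phi=\ccl^{gr}(A)$. You also address the step ``we may assume $I$ is graded'', which the paper treats as evident.

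The point you flag as the main obstacle is easily settled. $Q$ itself is not graded, so work inside the graded ring $A_{\set}$. Suppose $J\subseteq A_{\set}$ is a graded fractional ideal with $s\in J\cap\set$ and $tJ\subseteq A$ for some $t\in\set$. Then $t\in(A:J)\subseteq s^{-1}A\subseteq A_{\set}$, and in $A_{\set}$ the condition $qJ\subseteq A$ can be checked on homogeneous components. Hence colons and $*$-products of graded height-one primes remain graded sub-bimodules of $A_{\set}$ of the same type.

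One claim needs correcting: Hypothesis \eqref{hypo} gives only that $A$ is prime, not that it is a domain. Indeed, a connected graded algebra satisfying \eqref{hypo} need not be a domain. Take $R=\kk[a,b,c]$ and $\Omega=\ker\bigl(R(-1)^3\xrightarrow{(a,b,c)}R\bigr)$. Then $\Hom_R(\Omega,\Omega)$ is a connected graded maximal $R$-order in $\M_2(K)$. It therefore contains a homogeneous $n\neq 0$ of positive degree with $n^2=0$, and conjugation by the unit $1+n$ is a nontrivial inner automorphism fixing $R$. The paper's own injectivity remark explicitly assumes that $A$ is a domain.

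You can avoid this if $\Oz(A)$ is taken to consist of graded automorphisms, as the $\sigma$ produced by \cite[Lemma 1.7]{RRZ14} is. A bimodule isomorphism $A\cong{}^1A^{\sigma}$ yields a unit $u$ with $au=u\sigma(a)$ and $u_0\in\kk^{\times}$. Comparing the components of degree $\deg a$ for homogeneous $a$ then gives $\sigma(a)=a$. If ungraded automorphisms are allowed, you genuinely need $A$ to be a domain. This holds in the intended application by Proposition \ref{prop pi AS-regular}.
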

\begin{proof}   
    It suffices to prove that the map  $\Oz(A)\to \Picent(A), \sigma \mapsto [{}^1A^{\sigma}]$ is surjective, that is, for any $[U]\in \Picent A$, there exists some $\sigma\in\Oz(A)$ such that $U\cong {}^1 A^{\sigma}$ as $A$-$A$ bimodules.

    By \cite[Lemma 3.2]{GM94}, there exists an ideal $I$ of $A$ such that $U\cong I$ as $A$-$A$ bimodules. Since $I$ is a finitely generated projective $A$-module, it is a reflexive fractional $A$-ideal. Note that two reflexive fractional $A$-ideals $I$ and $I'$ are isomorphic $A$-$A$ bimodules if and only if there exists an invertible elements $a, b \in Q$ such that $xa=bx\in I'$ for any $x \in I$.
    Consequently, $\Picent(A)$ is a subgroup of $\ccl(A)$.

    Let $\set$ be a multiplicatively closed subset of $Z\setminus \{0\}$. By a similar argument as in Proposition \ref{prop graded}, the surjective map  $\phi:\di(A)\to \di(A_{\set})$, $I\mapsto I_{\set}$ induces a surjective map $\bar\phi:\ccl(A)\to\ccl(A_{\set})$, which further induces a surjective map $\tilde{\phi}: \Picent(A)\to \Picent(A_{\set})$.
    
    In particular, let $\set$ be the set of homogeneous elements of $Z\setminus \{0\}$. Again by \cite[Proposition I.1.1]{LVV88} $\Picent(A_{\set})\subseteq \ccl(A_{\set})=\{1\}$,  
    which implies that $\ccl(A)\cong\ker\bar{\phi}=\ccl^{gr}(A)$, and that $\Picent(A)\cong \ker\tilde{\phi}=\ker\bar\phi\cap\Picent(A)\subseteq\ccl^{gr}(A)$.

    Therefore, we may assume that  $I$ is a graded ideal of $A$ which is moreover an invertible $A$-$A$ bimodule. Since $A$ is connected graded, $U\cong I\cong {}^1A^{\sigma}$ for some $\sigma\in\Aut_{\kk}(A)$ (see, for example, \cite[Lemma 1.7]{RRZ14}). It follows from $[U]\in\Picent(A)$ that $\sigma\in\Oz(A)$.
\end{proof}
\begin{rmk}
    If $R$ is commutative ring, then $\Picent(R)=\Pic(R)$ is the Picard group of $R$. Note that noetherian normal domains are automatically maximal orders over themselves. Therefore, Proposition \ref{prop pic-oz} can be viewed as a generalization of \cite[Lemma 5.1]{MS76}, which states that connected graded noetherian normal domains have trivial Picard groups.
\end{rmk}

\section{Some noetherian PI AS-regular algebras are UFRs}
In this section, we prove the main result in this paper that any noetherian PI AS-regular algebra of dimension $3$ is a noetherian UFR (see Theorem \ref{thm 3-dim UFR}). The proof relies on Propositions \ref{prop pi AS-regular} and \ref{prop hom hom}, Corollary \ref{cor proj} and Lemma \ref{lem ext-lc}.  

In general, a PI ring may not be a finitely generated module over its center. Noetherian PI AS-regular algebras, however, are finitely generated modules over their centers. In fact, noetherian PI AS-regular algebras satisfy Hypothesis \eqref{hypo}.

\begin{prop}\label{prop pi AS-regular}\cite[Corollary 1.2]{SZ94}
    Let $A$ be a noetherian PI AS-regular algebra with center $Z$. Then $A$ is a domain, and $A$ is a maximal $Z$-order. Moreover, $A$ is a finitely generated module over $Z$, which is a noetherian normal domain.
\end{prop}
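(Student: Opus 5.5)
This proposition is cited from \cite[Corollary 1.2]{SZ94}, so the plan is to reconstruct the argument of Stafford--Zhang in outline. Let $A$ be a noetherian PI AS-regular algebra of dimension $d$ with center $Z$.

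\textbf{Step 1: $A$ is module-finite over a normal center.} Since $A$ is connected graded, noetherian and PI, a graded Artin--Tate type argument applies. Concretely, one uses the result that a noetherian PI connected graded algebra is finitely generated over a commutative graded subalgebra of its center. This shows that $A$ is a finitely generated module over $Z$, and that $Z$ is a finitely generated $\kk$-algebra; in particular $Z$ is noetherian. Artin--Schelter regularity then makes $A$ Auslander--Gorenstein and Cohen--Macaulay (GK-Cohen--Macaulay). For module-finite PI algebras this is known to follow from the AS-Gorenstein condition via local duality over $Z$, using the balanced dualizing complex $A^{\sigma}(-\ell)[d]$.

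\textbf{Step 2: $A$ is a prime ring, in fact a domain.} Finite global dimension together with the Auslander and Cohen--Macaulay conditions makes $A$ homologically homogeneous over $Z$ in the sense of Brown--Hajarnavis. By \cite{BH84}, a homologically homogeneous ring is a finite direct sum of prime rings. Since $A$ is connected graded, it has no nontrivial central idempotents, so $A$ is prime.

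To upgrade primeness to being a domain, consider the quotient ring $Q$, which is a central simple algebra by Posner's theorem. A zero-divisor in $A$ would force $Q\cong M_n(D)$ with $n>1$. One then obtains a contradiction with the Hilbert series: the $\kk$-dimensions of the graded pieces of a rank-one projective must be compatible with the Hilbert series of $A$, which is $1/p(t)$ for a polynomial $p$. The cleanest route is probably the Stafford--Zhang argument: a graded prime noetherian algebra of finite global dimension satisfying these homological conditions has all minimal graded left ideals of equal GK-multiplicity, which forces $n=1$.

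\textbf{Step 3: $Z$ is normal and $A$ is a maximal $Z$-order.} A homologically homogeneous prime ring has a normal center and is a maximal order, again by \cite{BH84}. Alternatively, for maximality one can argue directly: reflexive modules over $A$ of finite global dimension are determined in codimension one, and $A_{\mathfrak p}$ is hereditary for height-one primes $\mathfrak p$ of $Z$. If $A\subseteq B$ is an order, then $B$ is contained in the reflexive hull, which equals $\bigcap_{\mathfrak p} A_{\mathfrak p}$. Hereditary noetherian prime rings of Krull dimension $1$ with the Auslander property are maximal orders, so $B=A$.

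\textbf{Main obstacle.} I expect the main difficulty to be the domain property in Step 2, since primeness and maximal order-ness follow fairly formally from homological homogeneity.
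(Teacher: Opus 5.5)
The paper gives no argument for this statement; it simply quotes \cite[Corollary 1.2]{SZ94}. Your reconstruction uses reasonable ingredients: homological homogeneity, \cite{BH84} for the decomposition into prime rings and normality of the centre, and the absence of idempotents in a connected graded algebra. However, the two claims that carry your argument are false as stated, so there is a genuine gap.

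\textbf{Step 1 (finiteness over the centre).} A noetherian connected graded PI algebra need not be finite over its centre, or over any central subalgebra. Take $\lambda\in\kk^{\times}$ not a root of unity and $B=\kk\langle x,y\rangle/(yx-\lambda xy,\,y^2)$.
\begin{itemize}
\item $B$ is noetherian, being a factor of the quantum plane, and it is connected graded.
\item $B$ is PI: every commutator lies in $yB=By$ and $y^2=0$, so $B$ satisfies $[X_1,X_2][X_3,X_4]=0$.
\item Writing $z=\sum a_ix^i+\sum b_ix^iy$, one gets $[x,z]=(1-\lambda)\sum b_ix^{i+1}y$ and $[y,z]=\sum a_i(\lambda^i-1)x^iy$. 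Hence $Z(B)=\kk$.
\end{itemize}
So module-finiteness of $A$ over $Z$ can only come from regularity, and you give no argument that uses it. Both the Auslander--Gorenstein/Cohen--Macaulay step (local duality over $Z$) and all of Step 2 (the Brown--Hajarnavis theory presupposes finiteness over the centre) rest on this. The chain therefore breaks at its first link.

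\textbf{Step 3 (maximal order).} Homological homogeneity does not imply maximality, and neither does ``hereditary with the Auslander property''. Consider $R=\begin{pmatrix}\kk[x]&\kk[x]\\ x\kk[x]&\kk[x]\end{pmatrix}$.
\begin{itemize}
\item $R$ is a hereditary noetherian prime ring, finite over its centre $\kk[x]$.
\item Every simple $R$-module has projective dimension $1$, and $R$ is even Auslander-regular and Cohen--Macaulay.
\item Yet $R\subsetneq M_2(\kk[x])$ and $xM_2(\kk[x])\subseteq R$, so $R$ is not a maximal order.
\end{itemize}
Your reduction via reflexivity to the localizations $A_{\mathfrak p}$ at height-one primes of $Z$ is sound. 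But the claim that each such $A_{\mathfrak p}$ is a maximal order must come from something that excludes hereditary orders like $R$. Here that has to be the connected grading, and your outline supplies nothing of the kind.

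\textbf{Domain step.} This is only gestured at. Equal multiplicities of the $n$ uniform summands give $e(A)=n\,e(U)$, which by itself forces nothing. What is needed is integrality.
\begin{itemize}
\item A graded uniform left ideal $U$ has a finite graded free resolution, so its Hilbert series is $q_U(t)$ times that of $A$, with $q_U\in\mathbb{Z}[t^{\pm1}]$.
\item $q_M(1)=0$ whenever $M$ has smaller GK-dimension than $A$.
\item Hence $e(U)/e(A)=q_U(1)$ is a positive integer, and $n\,q_U(1)=1$ forces $n=1$.
\end{itemize}
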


We recall the notion of homologically homogeneous rings \cite{BH84}. We adopt the description in \cite{SV08} for convenience, which coincide with \cite{BH84} and \cite{SZ94} for noetherian prime affine $\kk$-algebras.
\begin{defn}\cite{SV08}
    Let $A$ be a affine prime $\kk$-algebra that is a finitely generated module over its center $Z$. We say $A$ is a \textit{homologically homogeneous ring of dimension $d$}, if all simple $A$-modules have the same projective dimension $d$.
\end{defn}

The following facts are well-known in literature. For the material concerning  dualizing complexes we refer to \cite{vdB97, YZ99}. Note that noetherian connected graded $\kk$-algebras are automatically affine $\kk$-algebras.

\begin{prop}\label{prop hom hom}
    Let $A$ be a noetherian PI AS-regular algebra of dimension $d$. Then the following hold.
    \begin{enumerate}
        \item $A$ is a homologically homogeneous ring of dimension $d$.
        \item $\gld(A_{\p})=\height\p$ for any $\p\in\Spec Z$.
    \end{enumerate} 
\end{prop}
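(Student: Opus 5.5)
The plan is to derive both statements from the existence of a balanced dualizing complex of a particularly simple form, together with the fact that $A$ is a finitely generated module over its center. By Proposition \ref{prop pi AS-regular}, $A$ is a domain, a maximal $Z$-order, and finitely generated over the noetherian normal domain $Z$. Since $A$ is noetherian PI connected graded, $A$ satisfies the $\chi$-condition and has finite cohomological dimension. Hence by Van den Bergh's theorem \cite{vdB97} it has a balanced dualizing complex $R_A\cong{}^1A^{\mu}(-l)[d]$, where $\mu$ is the Nakayama automorphism and $l$ is the Gorenstein parameter. This uses the AS-Gorenstein condition. Moreover, $A$ is Auslander regular and Cohen--Macaulay with respect to GK-dimension, by \cite{SZ94}: for PI AS-regular algebras, GKdim is an exact finitely partitive dimension function and $\Ext^j_A(\Ext^j_A(M,A),A)$ behaves as required.

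For (1), let $S$ be a simple (left) $A$-module. Then $S$ is finite dimensional over $\kk$, since $A$ is affine PI and hence every simple module is finite dimensional over $\kk$ by Kaplansky. Thus $\operatorname{GKdim} S=0$. By the Cohen--Macaulay property, $j(S)=\operatorname{GKdim}A-\operatorname{GKdim}S=d$. Since $\gld A=d$, we get $\pd S\le d$. On the other hand $\Ext^d_A(S,A)\ne 0$, because $j(S)=d$ means it is the first nonvanishing Ext and it is nonzero for a nonzero module. Hence $\pd S=d$. This matches the definition from \cite{SV08} with "affine prime, finite over center", which holds by Proposition \ref{prop pi AS-regular}. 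Alternatively one can invoke \cite[Theorem 1.1/Corollary 1.2]{SZ94} directly, where homological homogeneity is established. I expect to cite this rather than reprove it.

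For (2), I will use the localization theory of homologically homogeneous rings due to Brown--Hajarnavis \cite{BH84}. If $A$ is homologically homogeneous of dimension $d$ and finite over $Z$, then for each $\p\in\Spec Z$ the localization $A_\p$ is homologically homogeneous. Its global dimension equals $\height \p$; equivalently, $\gld A_\p=\operatorname{Kdim}Z_\p$ \cite[Theorem 5.3 and 2.?]{BH84}, and $\height\p=\dim Z_\p$. The argument proceeds in two steps:
\begin{enumerate}
\item The global dimension of $A_\p$ is the supremum of $\pd$ of simple $A_\p$-modules, and these correspond to maximal ideals of $A$ lying over $\p$ (via lying over for the finite extension $Z\subseteq A$).
\item For such a simple $A_\p$-module $T$, Ext can be computed after localizing, and by homological homogeneity (a "Cohen--Macaulay/equidimensional" statement over the center) $\pd_{A_\p} T=\operatorname{Kdim} Z_\p$.
\end{enumerate}

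The main obstacle is exactly this localization step. Homological homogeneity is stated for maximal ideals, i.e. for simple $A$-modules. For a nonmaximal $\p$ one needs the Brown--Hajarnavis result that homological homogeneity passes to localizations with dimension dropping to $\height\p$. This in turn uses that $Z$ is Cohen--Macaulay and that $A_\p$ is a Cohen--Macaulay $Z_\p$-module, maximal of dimension $\height\p$. I would cite \cite{BH84} for this; if a direct argument is needed, I would use the Auslander--Buchsbaum-type formula over $Z_\p$ for the simple $A_\p$-modules.
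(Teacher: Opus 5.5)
Your proposal is correct. For (2) you do what the paper does, namely cite Brown--Hajarnavis (the paper uses \cite[Theorem 3.5]{BH84}). For (1) you take a different route from the paper's direct argument.

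\textbf{The paper's route for (1).} It goes through dualizing complexes. By \cite[Proposition 6.18]{YZ99}, the rigid dualizing complex of $A$ is ${}^1A^{\sigma}[d]$ with ${}^1A^{\sigma}$ an invertible bimodule. Then \cite[Proposition 2.9]{SV08} turns this into homological homogeneity. Like you, the paper also remarks that (1) can simply be extracted from \cite{SZ94}.

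\textbf{Your route for (1).} You use the Auslander-regular, Cohen--Macaulay property from \cite{SZ94}. A simple module $S$ is finite-dimensional over $\kk$ because $A$ is affine PI, so $\operatorname{GKdim}S=0$. The CM equality then gives $j(S)=\operatorname{GKdim}A=d$, hence $\Ext^d_A(S,A)\neq 0$ and $\pd S=d$.

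You should say explicitly why $\operatorname{GKdim}A=d$. Apply the CM equality to the trivial module $\kk$, whose grade is $d$ by the AS condition. You should also note that the CM property is being used for ungraded modules, since simple $A$-modules need not be graded.

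\textbf{Comparison.} Your argument is more hands-on and shows exactly where the lower bound $\pd S\geqslant d$ comes from. However, all of the depth sits inside the CM theorem of \cite{SZ94}. The paper's route packages the same information in the shape of the rigid dualizing complex and then invokes an existing characterization of homologically homogeneous rings. The balanced dualizing complex you set up in your first paragraph is never used in your argument and can be dropped.

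\textbf{A small slip in your sketch of (2).} The simple $A_{\p}$-modules correspond to the maximal ideals of $A_{\p}$, that is, to the prime ideals of $A$ lying over $\p$. They are not maximal ideals of $A$ unless $\p$ is itself maximal. Since you ultimately cite \cite{BH84}, this does not affect correctness.
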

\begin{proof}
    (1) This can be deduced from \cite{SZ94}. Or directly, by \cite[Proposition 6.18]{YZ99}, ${}^1 A^{\sigma}[d]$ is a rigid dualizing complex of $A$, where $\sigma\in\Aut_{\kk}(A)$. The rest follows from \cite[Proposition 2.9]{SV08} as ${}^1 A^{\sigma}$ is an invertible $A$-$A$ bimodule.


    (2) This is just \cite[Theorem 3.5]{BH84}. 
\end{proof}

Let $A$ be a connected graded $\kk$-algebra, $\m_A:=A_{\geqslant 1}$ be the unique graded maximal ideal of $A$. 
We provide a proof of the subsequent fact for the sake of convenience.

\begin{lem}\label{lem ext-nonvanishing}
    Let $A$ be a connected graded $\kk$-algebra, $M$ be a finitely generated graded $A$-module. If   
    $\pd_A(M)=n<\infty$, then $\Ext_A^n(M,N)\neq0$ for any nonzero finitely generated graded $A$-module $N$.
\end{lem}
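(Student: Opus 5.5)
We argue through the graded minimal free resolution of $M$. Since $A$ is connected graded and $M$ is finitely generated and graded, $M$ has a minimal graded free resolution
\[
0\to F_n \xrightarrow{d_n} F_{n-1}\to\cdots\to F_0\to M\to 0,
\]
with each $F_i$ a finitely generated graded free module. Minimality means $\operatorname{Im}(d_i)\subseteq \m_A F_{i-1}$. Because $\pd_A(M)=n$, the resolution has length exactly $n$ and $F_n\neq 0$. If we do not want to quote existence of minimal resolutions, we can first take any finite graded free resolution of length $n$. We then minimize it by choosing lifts of bases of $\kk\otimes_A(-)$, which is graded Nakayama. Finally, the $n$-th syzygy $\Omega^n M$ is projective, hence graded free, and it is nonzero because $\pd M=n$ exactly.

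Next we compute $\Ext^n_A(M,N)$ as the cokernel of
\[
d_n^*:\Hom_A(F_{n-1},N)\to\Hom_A(F_n,N).
\]
Write $F_n=\bigoplus_j A(-a_j)$ and $F_{n-1}=\bigoplus_i A(-b_i)$. Then $d_n$ is right multiplication by a matrix $(c_{ji})$ with entries in $\m_A$. Under the identifications $\Hom_A(F_n,N)\cong\bigoplus_j N(a_j)$ and $\Hom_A(F_{n-1},N)\cong\bigoplus_i N(b_i)$, the map $d_n^*$ sends $(y_i)$ to $\bigl(\sum_i c_{ji}y_i\bigr)_j$. Its image therefore lies in $\bigoplus_j \m_A N(a_j)=\m_A\Hom_A(F_n,N)$.

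It then suffices to show $\m_A N\neq N$. Since $N$ is nonzero, finitely generated and graded, it is bounded below in degree. Let $t$ be its lowest nonzero degree. Then $(\m_A N)_t=0$, so $N_t\not\subseteq \m_A N$; this is graded Nakayama. Pick a nonzero $v\in N_t$ and some $j$. The element of $\bigoplus_j N(a_j)$ with $v$ in the $j$-th slot and $0$ elsewhere is not in $\m_A\Hom_A(F_n,N)$. Hence $\Ext^n_A(M,N)\neq 0$.

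The main obstacle is bookkeeping rather than substance. We must justify that a minimal resolution of length exactly $n$ exists, in particular that $F_n\neq 0$ and the last map is minimal, and we must check that the matrix description of $d_n^*$ really lands in $\m_A$ times the target. Degree shifts cause no trouble, since only the lowest-degree argument is used. Note also that no noetherian hypothesis on $A$ is needed beyond finiteness of $M$'s resolution in each term. If that finiteness were in doubt, we would fall back on the Nakayama argument applied only to $F_n$ and $F_{n-1}$, which are finitely generated as soon as $M$ admits a finite resolution by finitely generated modules. This holds whenever $A$ is noetherian, the setting in which the lemma is applied.
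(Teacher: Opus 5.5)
Your proof is correct and is essentially the paper's argument: take a minimal graded free resolution of length $n$, observe that the image of $d_n^*$ lies in $\m_A$ times $\Hom_A(F_n,N)$, and conclude by graded Nakayama on $N$. The paper argues by contradiction: if $\Ext_A^n(M,N)=0$, every $f\colon F_n\to N$ factors through $d_n$, so $f(F_n)\subseteq \m_A N$, and since such images cover $N$ this forces $N=\m_A N$. You instead exhibit an explicit element outside the image of $d_n^*$, which amounts to the same thing.
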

\begin{proof}
    It follows from $\pd_A(M)=n$ that $M$ has a minimal free resolution 
    \begin{equation} \label{mini-resolution-M}
     0\to F_n\xrightarrow{d_n} F_{n-1}\to...\to F_0\to 0.   
    \end{equation}
    Then, 
    $\Hom_A(F_{n-1},N)\to \Hom_A(F_n,N)\to \Ext_A^n(M,N)\to 0$ is exact.
    
    Suppose $\Ext_A^n(M,N)=0$. Then every morphism $f\in \Hom_A(F_n,N)$ factors through $d_n$, that is, there exists $\Tilde{f}:F_{n-1}\to N$ such that $f=\Tilde{f}d_n$. The minimality of \eqref{mini-resolution-M} implies that
    \[f(F_n)=\Tilde{f}d_n(F_n)\subseteq \Tilde{f}(\m_AF_{n-1})=\m_A\Tilde{f}(F_{n-1})\subseteq \m_AN .\]
    Since $F_n$ is a free generator in the graded $A$-module category, there exist morphisms $\{f_i:F_n\to N\}_{i\in I}$ such that $N=\Sigma_{i\in I}f_i(F_n)$. As a result, $N\subseteq \m_AN$, which is a contradiction.
\end{proof}

\begin{cor}\label{cor proj}
    Let $A$ be a connected graded $\kk$-algebra, $M$ be a finitely generated graded $A$-module. Then the following are equivalent.
    \begin{enumerate}
        \item $M$ is a projective $A$-module.
        \item $\pd_A(M)\leqslant 1$, and $\Ext_A^1(M,N)=0$ for some nonzero finitely generated graded $A$-module $N$
    \end{enumerate}
\end{cor}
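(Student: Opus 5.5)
The plan is to derive Corollary \ref{cor proj} directly from Lemma \ref{lem ext-nonvanishing}, using the standard fact that over a connected graded $\kk$-algebra every finitely generated graded module has a minimal graded free resolution. Projectivity of a finitely generated graded module can then be read off from projective dimension: $M$ is projective if and only if $\pd_A(M)=0$.

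For (1) $\Rightarrow$ (2), suppose $M$ is projective. Then $\pd_A(M)=0\leqslant 1$. Moreover $\Ext_A^1(M,N)=0$ for every $A$-module $N$. So we may take any nonzero finitely generated graded module, for instance $N=\kk=A/\m_A$ or $N=A$, and condition (2) holds. The only point to note is that such a nonzero $N$ exists, which is clear since $A\neq 0$.

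For (2) $\Rightarrow$ (1), we have $\pd_A(M)\leqslant 1$, so $\pd_A(M)$ is finite and is either $0$ or $1$. Suppose $\pd_A(M)=1$. Then Lemma \ref{lem ext-nonvanishing} with $n=1$ gives $\Ext_A^1(M,N)\neq 0$ for every nonzero finitely generated graded $A$-module $N$. This contradicts the hypothesis that $\Ext_A^1(M,N)=0$ for some such $N$. Hence $\pd_A(M)=0$, and $M$ is projective.

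The only subtlety, and it is a mild one, is whether $\pd_A(M)$ computed in the graded category agrees with the ungraded projective dimension, and whether ``graded projective'' coincides with ``projective''. For finitely generated graded modules over a connected graded algebra this is standard. A graded module is projective if and only if it is graded projective, and the minimal graded free resolution computes the ungraded projective dimension. This also guarantees that Lemma \ref{lem ext-nonvanishing} applies with the ungraded $\pd_A(M)$. I would state this fact briefly, citing it as well known, rather than reprove it.
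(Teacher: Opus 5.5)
Your proof is correct and is exactly the intended argument: the paper states this corollary without a separate proof, as an immediate consequence of Lemma \ref{lem ext-nonvanishing} with $n=1$, and (1)$\Rightarrow$(2) is trivial. Your remark on graded versus ungraded projective dimension is a reasonable addition. The paper leaves this point implicit, though it already relies on it in the lemma's proof when it takes a minimal free resolution of length $\pd_A(M)$.
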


For the basic facts about  noncommutative  local cohomology we refer to \cite{Jor97}.   Let $\Gr A$ be the category of graded $A$-modules.
The \textit{torsion functor} is defined as 
\[\Gamma_{\m_A}(-):=\colim\Hom_A(A/A_{\geqslant n},-):\Gr A\to \Gr A.\]
It is direct to verify that for a finitely generated graded $A$-module $M$, $\Gamma_{\m_A}(M)$ is the largest graded submodule of $M$ of finite length. Moreover $\Gamma_{\m_A}$ is a left exact functor. Let $\hh_{\m_A}^i(-):=R^i\Gamma_{\m_A}(-)$ be the $i$-th derived functor of $\Gamma_{\m_A}$, which will be called the \textit{$i$-th local cohomology functor}. Note that
\[\hh_{\m_A}^i(-)=\colim\Ext_A^i(A/A_{\geqslant n},-):\Gr A\to \Gr A.\]
The \textit{depth} of a graded $A$-module $M$ is defined as
\[\depth_A(M):=\inf\{i\in \mathbb{N}\mid \hh_{\m_A}^i(M)\neq 0\}.\]
By \cite[Proposition 4.3]{Jor97} $\depth_A(M)=\inf\{i\in\mathbb{N}\mid \Ext_A^i(\kk, M)\neq 0\}$. Therefore, for any AS-regular algebra $A$ of dimension $n$, one conclude immediately that $\hh_{\m_A}^i(A)=0$ for all $i\neq n$.

The next lemma is modified from a result in commutative case \cite[Lemma 3.2]{ACS20} and \cite[Lemma 2.2]{Asg23}, which plays a key role in the proof of Theorem \ref{thm 3-dim UFR}.
\begin{lem}\label{lem ext-lc}
    Let $A$ be a noetherian connected graded $\kk$-algebra which is a finitely generated module over its center $Z$. Let $M$ and $N$ be two finitely generated graded $A$-modules, and $t$ be an integer such that $2 \leqslant t \leqslant \depth_Z(N)$. If
  $\Ext_A^i(M,N)$ are of finite length as $Z$-modules for $i=1,\cdots,t-1$,  then the following hold.
    \begin{enumerate}
        \item $\Ext_A^i(M,N)\cong \hh_{\m_Z}^{i+1}(\Hom_A(M,N))$ for $i=1,\cdots,t-2$.
        \item There is an injective $Z$-morphism $\Ext_A^{t-1}(M,N)\hookrightarrow \hh_{\m_Z}^{t}(\Hom_A(M,N))$.
    \end{enumerate}
\end{lem}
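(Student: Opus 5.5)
We follow the commutative argument of \cite[Lemma 3.2]{ACS20}: induct on $t$ via a dimension-shifting short exact sequence for $M$. The key device is the Grothendieck-type spectral sequence, or more elementarily the comparison between $\Gamma_{\m_Z}$ applied to $\Hom_A(F_\bdot,N)$ and $\Ext$. Since $A$ is module-finite over $Z$ and $M$ is finitely generated, take a graded free resolution $F_\bdot\to M$ by finitely generated graded free $A$-modules. Each $\Hom_A(F_j,N)$ is a finite direct sum of shifts of $N$, hence a finitely generated $Z$-module of depth at least $t$ over $Z$, so $\hh^i_{\m_Z}(\Hom_A(F_j,N))=0$ for $i<t$.

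The main step uses the double complex $C^{p,q}=\check{C}^p(\Hom_A(F_q,N))$, where $\check{C}$ is the \v{C}ech complex of $Z$ on homogeneous generators of $\m_Z$ (its cohomology computes $\hh_{\m_Z}$, and it is exact and $A$-linear). We compare the two spectral sequences.
\begin{itemize}
\item Taking cohomology in $p$ first gives $\hh^p_{\m_Z}(\Hom_A(F_q,N))$, which vanishes for $p<t$. Hence the total cohomology $H^n$ vanishes for $n<t$.
\item Taking cohomology in $q$ first gives $\check{C}^p(\Ext^q_A(M,N))$, since $\check C^p$ is a localization and therefore exact. The second page is $E_2^{p,q}=\hh^p_{\m_Z}(\Ext^q_A(M,N))$. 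For $1\le q\le t-1$ the module $\Ext^q_A(M,N)$ has finite length over $Z$, so $E_2^{p,q}=0$ for $p>0$ and $E_2^{0,q}=\Ext^q_A(M,N)$. For $q=0$ we have $E_2^{p,0}=\hh^p_{\m_Z}(\Hom_A(M,N))$.
\end{itemize}

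In the range of total degree at most $t$, the only nonzero terms are the row $q=0$ and the column entries $E_2^{0,q}$ with $1\le q\le t-1$, together with entries of total degree at least $t$ from rows $q\ge t$. The only possible differentials out of $E_r^{0,q}$ land in $E_r^{r,q-r+1}$. These are zero unless $q-r+1=0$, that is, $d_{q+1}:E^{0,q}\to E^{q+1,0}=\hh^{q+1}_{\m_Z}(\Hom_A(M,N))$. Incoming differentials into row $0$ come only from the column $p=0$. Since total cohomology vanishes in degrees $<t$:
\begin{itemize}
\item For $q\le t-2$, the differential $d_{q+1}:\Ext^q_A(M,N)\to \hh^{q+1}_{\m_Z}(\Hom_A(M,N))$ must be injective, because $E_\infty^{0,q}=0$. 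It must also be surjective, because $E_\infty^{q+1,0}$ contributes to $H^{q+1}=0$ when $q+1\le t-1$. This gives (1).
\item For $q=t-1$, the source still must die, giving the injection in (2). Surjectivity may fail because $H^t$ need not vanish.
\end{itemize}
One must also check that $\hh^0_{\m_Z}(\Hom)$ and $\hh^1_{\m_Z}(\Hom)$ vanish. This holds since $\Hom_A(M,N)\subseteq N^{\oplus}$ has depth at least $2$, using the left exactness of $\Hom$ and $t\ge2$. This is consistent with the indexing starting at $i=1$.

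The main obstacle is careful bookkeeping of the spectral sequence: verifying that no other differentials interfere with the $E^{0,q}$ and $E^{q+1,0}$ terms. If that becomes messy, the fallback is an induction on $t$. Write $0\to K\to F_0\to M\to0$, where $\Hom_A(F_0,N)$ has depth at least $t$. Then $\Ext^{i}_A(M,N)\cong\Ext^{i-1}_A(K,N)$ for $i\ge2$, and $0\to\Hom(M,N)\to\Hom(F_0,N)\to\Hom(K,N)\to\Ext^1(M,N)\to0$. Splitting this into two short exact sequences and using that $\Ext^1$ has finite length (so its $\hh^{\ge1}_{\m_Z}$ vanish), we get $\hh^{i}_{\m_Z}(\Hom(K,N))\cong\hh^{i+1}_{\m_Z}(\Hom(M,N))$ for $1\le i\le t-2$. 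We also get $\Ext^1(M,N)\hookrightarrow \hh^{2}_{\m_Z}(\Hom(M,N))$, which is an isomorphism when $t\ge3$. Then we apply induction to $K$.
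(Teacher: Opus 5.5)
Your main argument via the double complex $\check{C}^p(\Hom_A(F_q,N))$ is correct, and the bookkeeping you were worried about does close up. Fix $1\le q\le t-1$ and consider any differential into or out of $E_r^{0,q}$ or $E_r^{q+1,0}$ other than $d_{q+1}\colon E^{0,q}_{q+1}\to E^{q+1,0}_{q+1}$. Its source or target lies either outside the first quadrant, or at a position $(p,q')$ with $p>0$ and $1\le q'\le t-2$, where $E_2$ vanishes by finite length. Hence $E_\infty^{0,q}=\ker d_{q+1}$ and $E_\infty^{q+1,0}=\operatorname{coker} d_{q+1}$. The vanishing of total cohomology below degree $t$ then gives exactly (1) and (2).

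The paper avoids spectral sequences. It splits $\Hom_A(F_\bullet,N)$ into the short exact sequences $0\to \Ext^i_A(M,N)\to T_i\to X_i\to 0$ and $0\to X_i\to \Hom_A(F_{i+1},N)\to T_{i+1}\to 0$. Here $T_i$ are the cokernels and $X_i$ the images of the dual differentials. It then chases long exact sequences along $\Ext^i_A(M,N)\cong\Gamma_{\m_Z}(T_i)\cong \hh^1_{\m_Z}(X_{i-1})\cong \hh^1_{\m_Z}(T_{i-1})\cong\cdots\cong\hh^{i+1}_{\m_Z}(\Hom_A(M,N))$; in effect it computes your transgression $d_{q+1}$ by hand. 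Both routes use the same two inputs: $\Hom_A(F_i,N)$ has depth at least $t$, and $\hh^{>0}_{\m_Z}$ vanishes on finite-length modules. The paper's version is entirely elementary. Yours is shorter, identifies the maps as canonical edge maps, and gives the vanishing of $\hh^0_{\m_Z}$ and $\hh^1_{\m_Z}$ of $\Hom_A(M,N)$ for free. So the separate depth check you mention is not actually needed for (1) and (2).

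Your fallback induction has a minor slip. The isomorphism $\hh^i_{\m_Z}(\Hom_A(K,N))\cong\hh^{i+1}_{\m_Z}(\Hom_A(M,N))$ holds only for $2\le i\le t-2$, not at $i=1$. Indeed $\hh^1_{\m_Z}(\Hom_A(K,N))=0$, because $\Hom_A(K,N)$ is the kernel of $\Hom_A(F_1,N)\to\Hom_A(F_2,N)$, a map between modules of depth at least $2$. On the other hand, $\hh^2_{\m_Z}(\Hom_A(M,N))\cong\Ext^1_A(M,N)$ when $t\ge 3$, which need not vanish. What the two short exact sequences actually give at $i=1$ is $\Ext^1_A(M,N)\cong\hh^1_{\m_Z}(W)$, where $W$ is the image of $\Hom_A(F_0,N)\to\Hom_A(K,N)$. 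This also uses $\Gamma_{\m_Z}(\Hom_A(K,N))=0$, which you did not state. The induction applied to $K$ only needs the range $i\ge 2$, so the fallback still works once this is corrected.
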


\begin{proof}
    Let $F_{\bdot} \to M \to 0$ be a free resolution of ${}_AM$.
    By considering the complex
    \begin{equation*}
    \begin{tikzcd}[column sep=1.6em]
        \dotsb\ar[r,"d^*_{i-1}"] &
        \Hom_A(F_{i-1}, N)\ar[r,"d_i^*"] &
        \Hom_A(F_i, N) \ar[r,"d^*_{i+1}"]&
        \Hom_A(F_{i+1}, N)\ar[r]&\dotsb
    \end{tikzcd}
    \end{equation*}
    we obtain the following exact sequences:
    \begin{equation}\label{ses1}
        0\to \Ext_A^i(M,N)\to T_i\to X_i\to 0
    \end{equation}
    and
    \begin{equation}\label{ses2}
        0\to X_i\to \Hom_A(F_{i+1},N)\to T_{i+1}\to 0,
    \end{equation}
    where $T_i:=\Hom_A(F_{i},N)/\Img(d_i^*)$, and $X_i:=\Img(d_{i+1}^*)$ for all $i\geqslant 0$. 
    
    For any $0<i<t$, it follows from the assumption that $\Ext_A^i(M,N)$ is of finite length over $Z$ that 
    \begin{align*}
        \Ext_A^i(M,N)=& \Gamma_{\m_Z}(\Ext_A^i(M,N));\\
        \hh_{\m_Z}^j(\Ext_A^i(M,N))=& 0, \text{ for all } j>0.
    \end{align*}
    Obviously, for any $i\geqslant 0$, $\depth_Z(\Hom_A(F_{i},N))=\depth_Z(N)\geqslant t\geqslant 2$. Therefore, 
    $$\Gamma_{\m_Z}(X_i)\hookrightarrow \Gamma_{\m_Z}(\Hom_A(F_{i+1},N))=0.$$
    By applying the local cohomology functor to \eqref{ses1} and \eqref{ses2}, we obtain that
    \begin{align}
        \Ext_A^i(M,N) \cong & \, \Gamma_{\m_Z}(T_i),\ \forall \ 0<i<t;\label{equiv1}\\
        \hh_{\m_Z}^j(T_i) \cong & \hh_{\m_Z}^j(X_i),\ \forall \ 0<i<t,\ j>0;\label{equiv2}\\
        \hh_{\m_Z}^j(T_{i+1}) \cong & \hh_{\m_Z}^{j+1}(X_i),\ \forall \ 0 \leqslant j<t-1,\ i\geqslant 0.\label{equiv3}
    \end{align}
    Therefore, for any $0<i<t$,
    \begin{equation}\label{les}
        \Ext_A^i(M,N)\cong \Gamma_{\m_Z}(T_i)\cong \hh_{\m_Z}^1(X_{i-1})\cong \hh_{\m_Z}^1(T_{i-1})\cong  \cdots \cong \hh_{\m_Z}^{i-1}(T_1). 
    \end{equation}   
    By applying the local cohomology functor to the exact sequences
    $$0\to \Hom_A(M,N)\to \Hom_A(F_0,N)\to X_0\to 0 $$
    and
    $$0\to X_0\to \Hom_A(F_1,N)\to T_1\to 0,$$
    we obtain that 
    $$\hh_{\m_Z}^{i-1}(T_1)\cong \hh_{\m_Z}^i(X_0)\cong \hh_{\m_Z}^{i+1}(\Hom_A(M,N)) ,\ \forall \ 0<i<t-1.$$
    There is also an injective morphism 
    $$0\to \hh_{\m_Z}^{t-1}(X_0)\to \hh_{\m_Z}^t(\Hom_A(M,N)).$$
    Combined with \eqref{les} and \eqref{equiv3}, it follows that
    $$\Ext_A^{t-1}(M,N)\cong \hh_{\m_Z}^{t-2}(T_1)\cong \hh_{\m_Z}^{t-1}(X_0)\hookrightarrow \hh_{\m_Z}^{t}(\Hom_A(M,N)).$$ 
\end{proof}

Now we are ready to prove the main result in this paper.
\begin{thm}\label{thm 3-dim UFR}
    All noetherian PI AS-regular algebras of dimension no more than $3$ are noetherian UFRs. 
\end{thm}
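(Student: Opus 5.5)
By Proposition \ref{prop pi AS-regular}, $A$ is a domain satisfying Hypothesis \eqref{hypo}. So by Corollary \ref{cor graded} it suffices to show that every $P\in X^1_{gr}(A)$ is projective as a left and as a right $A$-module. By symmetry ($A^o$ is again noetherian PI AS-regular of the same dimension) I treat only ${}_AP$. If $\gld A\leqslant 2$, any reflexive module has projective dimension at most $\max(0,\gld A-2)=0$, so the real case is $d=3$. There the plan is to verify condition (2) of Corollary \ref{cor proj} with $M=N=P$.

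\emph{Step 1: $\pd_A(P)\leqslant 1$.} $P$ is a reflexive fractional ideal, since height-one primes are in $\di(A)$ by Theorem \ref{thm div group}. Hence ${}_AP\cong \Hom_{A^o}(P',A)$ for the finitely generated right module $P'=(A:P)$. Take a presentation $F_1\to F_0\to P'\to 0$ and dualize. This gives an exact sequence $0\to P\to F_0^*\to F_1^*\to C\to 0$. Since $\gld A=3$, we get $\pd C\leqslant 3$, so the second syzygy $P$ has $\pd_A P\leqslant 1$.

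\emph{Step 2: $\Ext^1_A(P,P)=0$.} This is the main obstacle, and I plan to use Lemma \ref{lem ext-lc} with $M=N=P$ and $t=3$, viewing everything over $Z$.

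\begin{itemize}
\item \textbf{Depth.} $P$ is a reflexive $A$-module, hence a second syzygy. Since $A$ is Cohen--Macaulay over $Z$ of dimension $3$ (homologically homogeneous, Proposition \ref{prop hom hom}), this should give $\depth_Z(P)\geqslant 2$. I need $\depth_Z P=3$ to take $t=3$. If only $2$ is available, take $t=2$ and use part (2).

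\item \textbf{Finite length of $\Ext^1_A(P,P)$ over $Z$.} For $\p\in\Spec Z$ with $\height\p\leqslant 2$, Proposition \ref{prop hom hom}(2) gives $\gld A_\p\leqslant 2$. The localization $P_\p$ is reflexive over $A_\p$, hence projective by the argument of Step 1, so $\Ext^1_A(P,P)_\p=0$. Thus $\Ext^1$ is supported only at $\m_Z$, and it is finitely generated, hence of finite length.

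\item \textbf{Reducing to $\Hom_A(P,P)$.} By the maximal order property, $\Hom_A(P,P)\cong(P:_rP)=A$. Then Lemma \ref{lem ext-lc} gives $\Ext^1_A(P,P)\hookrightarrow \hh^{t}_{\m_Z}(A)$ or $\cong \hh^2_{\m_Z}(A)$.

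\item \textbf{Vanishing.} $A$ is AS-regular of dimension $3$, so $\hh^i_{\m_A}(A)=0$ for $i\neq 3$. Local cohomology over $\m_Z$ agrees with that over $\m_A$ because $A$ is finite over $Z$ and $\m_Z A$ is $\m_A$-primary. Hence $\hh^2_{\m_Z}(A)=0$. This needs $t=3$, i.e.\ $\depth_Z P=3$ in the first bullet, and is where I expect care.
\end{itemize}

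If the depth bound in the first bullet fails, a fallback is available. Since $\pd_AP\leqslant1$ and $\depth_A A=3$, the Auslander--Buchsbaum formula (available for noetherian connected graded algebras) gives $\depth P\geqslant 2$. Then one works directly with $t=3$ on $N=A$-type modules.

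With Steps 1 and 2 in hand, Corollary \ref{cor proj} gives that $P$ is projective. Corollary \ref{cor graded} then shows $A$ is a noetherian UFR.
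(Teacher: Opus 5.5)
Your proposal follows the paper's proof. The steps match: the reduction via Corollary \ref{cor graded}; the bound $\pd_A P\leqslant 1$ from reflexivity, where your second-syzygy argument is just the content of the cited bound $\pd_A(M)\leqslant\max\{0,\gld(A)-2\}$; finite length of $\Ext^1_A(P,P)$ by localizing at primes of height $\leqslant 2$; and finally Lemma \ref{lem ext-lc} with $\Hom_A(P,P)\cong A$ and $\hh^2_{\m_Z}(A)\cong\hh^2_{\m_A}(A)=0$.

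The one thing to correct is your handling of $t$. The vanishing step does \emph{not} need $t=3$. With $t=2$, part (2) of Lemma \ref{lem ext-lc} gives $\Ext^1_A(P,P)\hookrightarrow\hh^2_{\m_Z}(\Hom_A(P,P))\cong\hh^2_{\m_Z}(A)=0$. This is exactly what the paper does.

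The only depth input this needs is $\depth_Z P\geqslant 2$, and it comes directly from your Step 1 sequence $0\to P\to F_0^*\to F_1^*$. Since $\depth_Z A=3$, the image $K$ of $F_0^*\to F_1^*$ has $\hh^0_{\m_Z}(K)=0$. Then $0\to P\to F_0^*\to K\to 0$ forces $\hh^0_{\m_Z}(P)=\hh^1_{\m_Z}(P)=0$.

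By contrast, you cannot establish $\depth_Z P=3$ in advance. By the graded Auslander--Buchsbaum formula, $\pd_A P+\depth P=3$, so $\depth P=3$ is equivalent to $\pd_A P=0$, which is the conclusion itself. The $t=3$ branch is therefore circular and should be dropped. So should your fallback paragraph: the depth hypothesis in the lemma is on $N=P$, not on $A$, and $\depth P\geqslant 2$ only permits $t=2$, which is all you need.
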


\begin{proof}
    By Corollary \ref{cor graded}, it suffices to prove that all the prime ideals in $X^1_{gr}(A)$ are left and right projective $A$-modules.

    Note that height-one prime ideals of $A$ are reflexive $A$-modules. For a finitely generated reflexive $A$-module $M$, we have that \cite[Corollary 2.13]{QWZ19}
    $$\pd_A(M)\leqslant \max\{0,\gld(A)-2\}.$$

    If $\gld(A)<3$ then all reflexive $A$-modules are projective. In particular, all height-one prime ideals of $A$ are projective $A$-modules. Therefore, it is left to prove the case where $\gld(A)=3$. By the above inequality we obtain that $\pd_A(P)\leqslant 1$ for any $P\in X_{gr}^1(A)$.
 
    For any  $\p\neq \m_Z\in\Spec Z$ such that $\p$ is a graded prime ideal, $$\height\p< \height\m_Z=\Kdim(Z)=3.$$ 
    By Proposition \ref{prop hom hom} $\gld(A_{\p})=\height\p\leqslant 2$. Note that for any such $\p$, either $P_{\p}=A_{\p}$ or $P_{\p}\in X^1(A_{\p})$. In any case, $P_{\p}$ is a reflexive $A_{\p}$-module. Again by \cite[Corollary 2.13]{QWZ19}, $P_{\p}$ is a projective $A_{\p}$-module. Consequently 
    $$\Ext_A^1(P,P)_{\p}\cong \Ext_{A_{\p}}^1(P_{\p},P_{\p})=0,$$
    which implies that $\Ext_A^1(P,P)$ is of finite length over $Z$. By Lemma \ref{lem ext-lc} and \cite[Lemma 4.15]{YZ99},  
    \[\Ext_A^1(P,P)\hookrightarrow\hh_{\m_Z}^2(\Hom_A(P,P))\cong \hh_{\m_Z}^2(A)\cong \hh_{\m_A}^2(A)=0.\]
    It follows from $\pd_A(P)  \leqslant 1$ and Corollary \ref{cor proj} that ${}_AP$ is projective. 
\end{proof}

Unique factorization property of noetherian PI AS-regular algebras has several applications. For instance, it guarantees the existence of reflexive hull discriminant over these algebras, which is a powerful tool in determining automorphism groups. We refer to \cite{CGWZ22} for the unexplained terminology.

\begin{cor}
    Let $A$ be a noetherian PI AS-regular algebra of dimension $3$. Then the $\overline{\mathcal{R}}_v^p$-discriminant $\Bar{\varrho}_v^{[p]}(A/Z)$ always exists. In particular, the $\overline{\mathcal{R}}$-discriminant $\Bar{\varrho}(A/Z)$ exists.
\end{cor}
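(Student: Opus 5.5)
The plan is to read off the existence of the reflexive hull discriminants from the general existence criterion in \cite{CGWZ22}, using Theorem \ref{thm 3-dim UFR} to verify its hypotheses. By Proposition \ref{prop pi AS-regular}, $A$ is a noetherian domain and a maximal $Z$-order. It is finitely generated as a module over its center $Z$, and $Z$ is a noetherian normal domain. So the setting of \cite{CGWZ22} applies: $A$ is an algebra module-finite over a central subalgebra $R=Z$, and $Z$ is a Krull (indeed noetherian normal) domain. The $\overline{\mathcal{R}}_v^p$-discriminant is defined as a reflexive hull of an ideal of $Z$ generated by discriminant-type elements. It exists precisely when this divisorial ideal is principal, i.e.\ generated by a normal (or central) element.

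The key steps are as follows. First, recall the existence criterion from \cite{CGWZ22}. I expect it to say that the discriminant exists whenever the relevant reflexive ideal, computed in $Z$ or in $A$, is principal. A sufficient condition is that every divisorial ideal of the relevant form is principal, which is guaranteed when the ring is a UFD or UFR in the appropriate sense. Second, apply Theorem \ref{thm 3-dim UFR}: $A$ is a noetherian UFR. Then, by Theorem \ref{Aka's result}, $\ncl(A)=\{1\}$, so every reflexive fractional $A$-ideal is of the form $Ax$ with $x$ normal. Third, the reflexive hull of the discriminant ideal, viewed as an ideal of $A$, is a reflexive ideal. Hence it lies in $\di(A)=\npr(A)$ and is generated by a normal element. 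This gives $\Bar{\varrho}_v^{[p]}(A/Z)$ for every $p$. The statement about $\Bar{\varrho}(A/Z)$ is the special case singled out in \cite{CGWZ22}, presumably the case $v$ equal to the rank and $p$ trivial.

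The main obstacle is matching definitions. The discriminant in \cite{CGWZ22} may be required to be an element of $Z$ or of $A$ up to units, and its reflexive hull may be taken over $Z$ rather than $A$. If it is taken in $Z$, the triviality of $\ncl(A)$ does not immediately give principality in $Z$. I would then pass through $\di(Z)\to\di(A)$, $I\mapsto (A:(A:IA))$, and use that the existence criterion in \cite{CGWZ22} is phrased using normal elements of $A$ (the ``$\overline{\mathcal R}$'' version). I would first check the exact hypothesis of the existence theorem there, expecting it to be ``$A$ is a UFR'' or ``every height-one prime of $A$ is generated by a normal element''. Remark \ref{rmk principal}(3) with Theorem \ref{thm 3-dim UFR} delivers exactly that.
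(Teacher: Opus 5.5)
Your route is the paper's: get the UFR property from Theorem \ref{thm 3-dim UFR} and then invoke the existence theorem of \cite{CGWZ22}. The gap is in the hypotheses you check for that theorem. You guess that the only hypothesis needed is ``$A$ is a UFR'' (or ``height-one primes are generated by normal elements''). The result the paper actually cites, \cite[Theorem 4.8]{CGWZ22}, is stated for a \emph{homologically homogeneous} noetherian UFR with \emph{affine} center. Your proposal never establishes homological homogeneity.

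The missing input is Proposition \ref{prop hom hom}(1): $A$ is homologically homogeneous of dimension $3$. You also need that $Z$ is affine. This holds because $A$ is affine (it is noetherian connected graded) and module-finite over $Z$ by Proposition \ref{prop pi AS-regular}, so the Artin--Tate lemma applies. With these two facts added to Theorem \ref{thm 3-dim UFR}, the corollary follows by direct citation. The detour through $\ncl(A)$ and the matching of definitions is then unnecessary.

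Your fallback argument is a correct observation about $A$. Since $A$ satisfies \eqref{hypo} and is a UFR, Theorem \ref{Aka's result} gives $\ncl(A)=\{1\}$. So every nonzero reflexive ideal of $A$ lies in $\di(A)=\npr(A)$ and is generated by a normal element. If the $\overline{\mathcal{R}}_v^p$-discriminant is by definition a normal generator of the reflexive hull, in $A$, of the (nonzero) discriminant ideal extended to $A$, this would give a self-contained proof. That proof would not even need homological homogeneity.

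However, you have not pinned down that definition. Your contingency for the case where the hull is taken in $Z$ (``pass through $\di(Z)\to\di(A)$'') does not show principality of anything. As written, that branch proves nothing, so the proposal only goes through once the hypotheses of the cited theorem are fully verified as above.
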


\begin{proof}
    By Theorem \ref{thm 3-dim UFR} and Proposition \ref{prop hom hom}, $A$ is a homologically homogeneous noetherian UFR with affine center $Z$. The conclusion follows from \cite[Theorem 4.8]{CGWZ22}.
\end{proof}

Although all noetherian PI AS-regular algebras are domains, some of them may not be UFDs.

A nonzero non-unit normal element $a$ of a prime noetherian ring $A$ is called \textit{irreducible} if there are no non-unit normal elements $b$ and $c$ of $A$ such that $a=bc$.

\begin{lem} \label{lem normal elements and units}
    Let $a$ be a normal element of a ring $A$. 
    \begin{enumerate}
        \item $a$ is a unit in $A$ if and only if $a$ is left (or right) invertible in $A$.
        \item Let $A$ be a prime ring. If $a=bc$ for $b,c \in A$, where $b$ is a normal element, then $c$ is also a normal element.
    \end{enumerate}
\end{lem}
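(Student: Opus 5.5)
For (1), the plan is to use normality to move an inverse from one side to the other. Suppose $a$ is normal and left invertible, so $ba=1$ for some $b\in A$. Since $Aa=aA$, the element $1=ba$ lies in $aA$, so $1=ac$ for some $c\in A$ and $a$ is also right invertible. Then $b=b(ac)=(ba)c=c$, hence $a$ is a unit. The right invertible case is symmetric: $1=ab\in aA=Aa$ gives $1=c'a$, and the same computation applies. The converse direction is trivial.

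For (2), let $A$ be prime and $a=bc$ with $a$ and $b$ normal. The first step is to dispose of degenerate cases. If $b=0$ then $a=0$, and $c$ need not be normal, so I will assume $a\neq 0$ (the intended reading, with $a$ a nonzero normal element as in the surrounding definition of irreducibility). Then $b\neq 0$, and in a prime ring a nonzero normal element is regular. Indeed, if $bx=0$ then $bAx=Abx=0$, and primeness forces $x=0$. If $xb=0$ then $xAb=xbA=0$, so again $x=0$.

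The main step is to show $Ac=cA$. For $r\in A$, normality of $a$ gives $cr'=$ something of the right form as follows. We have $bcr=ar=sa=sbc$ for some $s\in A$. Since $b$ is normal, $sb=bs'$ for some $s'\in A$, so $bcr=bs'c$. Cancelling the regular element $b$ on the left gives $cr=s'c\in Ac$, so $cA\subseteq Ac$. Symmetrically, $rc$ satisfies $b(rc)=r'bc=r'a=at=bct$ for some $t\in A$, using $br'=rb$ from normality of $b$. Left cancellation of $b$ gives $rc=ct$, so $Ac\subseteq cA$.

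The only delicate point is the regularity of $b$, and that follows from primeness as shown above. The rest is bookkeeping with the normality relations.
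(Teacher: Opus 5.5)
Your proof is correct and takes the same approach as the paper. Part (1) is the same observation $1\in Aa=aA$. Part (2) is an element-wise unpacking of the paper's chain $bcA=aA=Aa=Abc=bAc$, followed by left cancellation of the regular element $b$; you are also right that the case $b=0$ has to be excluded, which the paper assumes tacitly when it calls normal elements regular. One small slip: for $Ac\subseteq cA$ the relation you need is $br=r'b$ (that is, $bA\subseteq Ab$), not $br'=rb$, though this also follows from normality of $b$.
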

\begin{proof}
    (1) Suppose that $a$ is left invertible. Then $A=Aa=aA$, which implies that $A$ is also right invertible.

    (2) Since $A$ is a prime ring, normal elements are regular.  It follows from $bcA=aA=Aa=Abc=bAc$ that $cA=Ac$, that is, $c$ is a normal element. 
\end{proof}

\begin{prop} \label{prop irreducible elememts in UFR}
    Let $A$ be a noetherian UFR. For any non-unit nonzero normal element $a \in A$, $Aa$ is a prime ideal if and only if $a$ is irreducible.
\end{prop}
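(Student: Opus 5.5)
We prove the two implications separately. Throughout we use that in a prime noetherian ring nonzero normal elements are regular, and that for a normal regular element $x$ the map $\eta_x$ defined by $xr=\eta_x(r)x$ is an automorphism of $A$.

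\emph{($\Rightarrow$)} Suppose $P=Aa$ is prime, and write $a=bc$ with $b,c$ normal. Then $AbAc=Abc=Aa\subseteq P$. Since $P$ is prime, either $Ab\subseteq P$ or $Ac\subseteq P$.

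If $b\in P$, write $b=ra$ for some $r\in A$. Then $a=bc=rac=r\eta_a(c)a$. Since $a$ is regular, this gives $1=r\eta_a(c)$. So $\eta_a(c)$ is right invertible, and by normality it is a unit (Lemma \ref{lem normal elements and units}(1), applied to the normal element $\eta_a(c)$). Hence $c=\eta_a^{-1}(\eta_a(c))$ is a unit.

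If instead $c\in P$, write $c=sa=sbc$. Since $c$ is regular, $sb=1$, and by Lemma \ref{lem normal elements and units}(1) $b$ is a unit.

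So every factorization of $a$ into normal elements has a unit factor, and $a$ is irreducible. This direction needs neither the noetherian hypothesis beyond regularity nor the UFR hypothesis.

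\emph{($\Leftarrow$)} Suppose $a$ is irreducible. Since $a$ is a nonzero non-unit normal element, $Aa$ is a proper nonzero ideal. Choose a prime $P$ minimal over $Aa$; it is nonzero.

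Because $A$ is a noetherian UFR, $P$ contains a nonzero principal prime ideal $Ap$. We would like to arrange $Ap\supseteq Aa$, which is easiest if $P$ has height one. In a noetherian prime ring, primes minimal over a nonzero normal (hence regular normal) element have height one by the noncommutative principal ideal theorem (Jategaonkar; \cite[Theorem 4.1.11]{MR01}). So $\height P=1$, and then $Ap\subseteq P$ with $Ap$ a nonzero prime forces $P=Ap$. Thus $a\in Ap$, say $a=rp$.

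Rewrite this as $a=p\,\eta_p^{-1}(r)$ with $p$ normal. By Lemma \ref{lem normal elements and units}(2), $c:=\eta_p^{-1}(r)$ is normal. The element $p$ is not a unit, because $Ap$ is a proper prime ideal. Irreducibility of $a$ then forces $c$ to be a unit, so $Aa=Ap=P$ is prime.

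\emph{Main obstacle.} The key step is the height-one claim for primes minimal over $Aa$, that is, the principal ideal theorem for normal elements. If citing it is unsatisfactory, there is an alternative: use the descending chain condition on primes available for UFRs (Remark \ref{rmk principal}(3)). Any nonzero prime contains a height-one prime, which is principal, say $Aq$. One then iterates factorization, or divides $a$ by $q$ repeatedly: each step $a=q a'$ gives a strictly larger ideal $Aa\subsetneq Aa'$, which must terminate by noetherianity. This produces a factorization of $a$ with a prime normal factor, and the same irreducibility argument concludes.
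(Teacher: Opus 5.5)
Your argument is correct and is essentially the paper's proof. For ($\Rightarrow$) you use primeness of $Aa$ to put one factor in $Aa$ and cancel the regular element. For ($\Leftarrow$) you take a prime $P$ minimal over $Aa$, get $\height P=1$ from Jategaonkar's principal ideal theorem, hence $P=Ap$, and apply irreducibility to $a=p\,\eta_p^{-1}(r)$.

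The fallback in your last paragraph would not replace the principal ideal theorem. A principal prime $Aq\subseteq P$ supplied by the UFR property need not contain $a$, and that containment is exactly what $\height P=1$ provides, so ``dividing $a$ by $q$'' presupposes what has to be shown.
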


\begin{proof}
    Suppose $Aa$ is a prime ideal. If $a=bc$ for some normal elements $b$ and $c$ of $A$, then $bAc=Abc=Aa$. We may assume that $b\in Aa$. Therefore, there exists $b'\in A$ such that $b=ab'$, which implies that $1=cb'$. Consequently, $c$ is a unit.

    On the other hand, suppose that $a$ is irreducible. Take a prime ideal $P$ of $A$ such that $P$ is minimal over $Aa$. By Jategaonkar's principal ideal theorem  \cite[Theorem 4.1.11]{MR01} $\height P=1$. Since $A$ is a noetherian UFR, there exists a normal element $b$ of $A$ such that $P=Ab$. As a result, there exists $a'\in A$ such that $a=a'b$. It follows from Lemma \ref{lem normal elements and units} that $a'$ is a normal element of $A$. Since $a$ is irreducible, it follows that $a'$ is a unit, and consequently, $Aa=Ab$ is a prime ideal.
\end{proof}

\begin{ex}\label{ex non-UFD}
    Consider the graded algebra $A:=\kk\langle x,y\rangle/(xy+yx)$ with $\deg x =\deg y =1$, where $\ch \kk\neq 2$. Then $A$ is a noetherian PI AS-regular algebra of dimension $2$, and so $A$ is a noetherian UFR by Theorem \ref{thm 3-dim UFR}. Although all height-one prime ideals of $A$ are all principal, some of them may not be completely prime, which means that $A$ is not a UFD.

    Obviously, $a:=x^2+y^2$ is a central element of $A$. We claim that $a$ is an irreducible normal element. 
    Suppose on the contrary that $a=bc$ for some non-unit normal elements $b,c\in A$. Since $a$ is a homogeneous element (of degree $2$) of $A$, which is a domain, we conclude that $b$ and $c$ are both homogeneous elements of degree $1$. Since $A_1=\kk x\oplus \kk y$, there exist $\lambda,\mu\in\kk$ such that $b=\lambda x+\mu y$. Since $b$ is a normal element, a direct calculation implies that either $\lambda=0$ or $\mu=0$. Things are similar for $c$. We conclude immediately that $bc\neq x^2+y^2$, which is a contradiction.

    Therefore, $Aa$ is a prime ideal of height-one by Proposition \ref{prop irreducible elememts in UFR}. Since $a=(x+y)^2$,  $Aa$ is not completely prime. 
\end{ex}

In the process of demonstrating that regular local rings are Unique Factorization Domains (UFDs), Nagata was able to reduce the proof to the three-dimensional case \cite[Proposition 11]{Nag58}, which was ultimately resolved by Auslander and Buchsbaum in \cite{AB59}. However, a comparable dimension reduction does not apply to connected graded algebras. Nonetheless, certain PI AS-regular algebras with higher dimensions remain unique factorization rings (UFRs).

We need the following result from  \cite{CJ86} later. We refer to \cite[Section 4]{CJ86} for the unexplained terminology.

\begin{prop}\label{prop ufr and ore ext}
    Let $A$ be a noetherian UFR. 
    \begin{enumerate}
        \item \cite[Theorem 4.2, Corollary 4.3]{CJ86} Let $\sigma$ be an automorphism of $A$. If every nonzero $\sigma$-prime ideal contains a nonzero principal $\sigma$-ideal, then $A[x;\sigma]$ is a noetherian UFR. In particular, $A[x;\sigma]$ is a noetherian UFR if $\sigma$ is an automorphism of finite order.
        \item \cite[Theorem 5.4]{CJ86} Let $\delta$ be a derivation of $A$ such that every nonzero $\delta$-prime ideal contains a nonzero principal $\delta$-ideal. Then $A[x;\delta]$ is a noetherian UFR. 
    \end{enumerate}
\end{prop}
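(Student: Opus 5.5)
Both parts of the proposition are results of Chatters and Jordan, so the plan is mainly to check that the hypotheses match those of \cite[Theorem 4.2]{CJ86} and \cite[Theorem 5.4]{CJ86}, with the terminology of \cite[Section 4]{CJ86}. Here a $\sigma$-ideal (resp.\ $\delta$-ideal) is an ideal $I$ with $\sigma(I)=I$ (resp.\ $\delta(I)\subseteq I$). A $\sigma$-prime ideal is a proper $\sigma$-ideal $I$ such that $JK\subseteq I$ for $\sigma$-ideals $J,K$ forces $J\subseteq I$ or $K\subseteq I$. A principal $\sigma$-ideal is a $\sigma$-ideal generated by a normal element. With these conventions, the first sentence of (1) and all of (2) are exactly the cited theorems: one localizes at the principal $\sigma$- (resp.\ $\delta$-) primes, applies Ore/Gauss-type arguments to $A[x;\sigma]$ (resp.\ $A[x;\delta]$), and uses that $x$ generates a principal completely prime ideal in the skew case. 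So the only step that needs its own argument is the ``in particular'' clause of (1), which is \cite[Corollary 4.3]{CJ86}. I sketch it as follows.

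Let $\sigma$ have finite order $n$ and let $I$ be a nonzero $\sigma$-prime ideal. The first step is the standard description of $\sigma$-primes in a noetherian ring. Take a prime $P$ minimal over $I$. Since $\sigma$ permutes the finitely many primes minimal over $I$, the ideal $J=\bigcap_{i=0}^{n-1}\sigma^i(P)$ is a $\sigma$-ideal containing $I$. Some product of $\sigma$-stable combinations of the minimal primes lies in $I$, so $\sigma$-primeness gives $I=\bigcap_{i=0}^{n-1}\sigma^i(P)$; in particular $P\neq 0$. This is routine, but it is the step I would check most carefully: one has to group the minimal primes into $\sigma$-orbits and use that the product of the orbit-intersections lies in $I$.

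Next, since $A$ is a noetherian UFR, $P$ contains a nonzero principal prime ideal $Aq$ with $q$ normal. Each $\sigma^i(q)$ is then normal, and $\sigma^i(Aq)=A\sigma^i(q)\subseteq\sigma^i(P)$. Put
\[ c=q\,\sigma(q)\cdots\sigma^{n-1}(q). \]
A product of normal elements is normal, and normal elements of a prime ring are regular, so $c\neq 0$. The ideal $Ac$ lies in every $\sigma^i(P)$, since each factor of $c$ lies in the corresponding ideal $\sigma^j(P)$ and that is a two-sided ideal; hence $Ac\subseteq I$.

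It remains to show that $Ac$ is a $\sigma$-ideal. We have $\sigma(c)=\sigma(q)\cdots\sigma^{n-1}(q)\,q$. Since $q$ is normal, this differs from $c$ only by moving $q$ past the other factors, so $A\sigma(c)=Ac$. Alternatively, $A\sigma(c)$ and $Ac$ are both the product of the ideals $A\sigma^i(q)$, because normality lets us compute $A\sigma^i(q)\cdot A\sigma^j(q)=A\sigma^i(q)\sigma^j(q)$. Thus $Ac$ is a nonzero principal $\sigma$-ideal contained in $I$, and the first part of (1) applies.
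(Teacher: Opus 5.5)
Like the paper, you cite \cite[Theorem 4.2]{CJ86} and \cite[Theorem 5.4]{CJ86} for the main assertions. The paper also cites \cite[Corollary 4.3]{CJ86} for the finite-order clause and gives no proof of its own, so the only new content in your proposal is your argument for that clause. Its first part is fine: a nonzero $\sigma$-prime $I$ equals $\bigcap_{i=0}^{n-1}\sigma^i(P)$ for a prime $P$ minimal over $I$, and $0\neq Ac\subseteq I$.

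The gap is the final step, $A\sigma(c)=Ac$. Write $c=qw$ with $w=\sigma(q)\cdots\sigma^{n-1}(q)$, and let $\eta_q$ be the automorphism with $qa=\eta_q(a)q$. Then $\sigma(c)=wq=q\,\eta_q^{-1}(w)$, so moving $q$ past $w$ twists $w$. Thus $A\sigma(c)=Ac$ amounts to $A\eta_q^{-1}(w)=Aw$, i.e.\ to $(Aq)(Aw)=(Aw)(Aq)$. Normality alone does not give this. For nonzero normal $a,b$ in a prime ring, $Aab=Aba$ holds exactly when $ab=u\,ba$ for a unit $u$ of $A$, and that is not automatic: products of two-sided ideals need not commute. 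Your alternative justification has the same defect, because the two products of the ideals $A\sigma^i(q)$ are taken in different orders.

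The missing ingredient must come from the UFR hypothesis.
\begin{itemize}
\item Each $A\sigma^i(q)=\sigma^i(Aq)$ is a nonzero principal prime, hence of height one by the principal ideal theorem. So distinct ones are incomparable.
\item Let $Ap\neq Ap'$ be two such primes, with $p,p'$ normal, and let $x=pr\in Ap\cap Ap'$. Then $pAr=Ax\subseteq Ap'$ and $p\notin Ap'$, so $r\in Ap'$ and $x\in pAp'=(Ap)(Ap')$.
\item Hence $(Ap)(Ap')=Ap\cap Ap'$, and symmetrically this also equals $(Ap')(Ap)$.
\end{itemize}
So the ideals $A\sigma^i(q)$ commute pairwise, and $A\sigma(c)=Ac$ follows.

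Alternatively, the same computation shows by induction that $\bigcap_{i}A\sigma^i(q)$ is the product of the distinct ideals $A\sigma^i(q)$. It is therefore generated by a normal element, visibly $\sigma$-stable, nonzero, and contained in $\bigcap_i\sigma^i(P)=I$. With either version of this commutation step added, your proof of the finite-order clause is complete.
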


\begin{prop}\label{prop ore ext of pi}
    Let $B$ be a noetherian PI AS-regular $\kk$-algebra. Suppose there exists an algebra $A$ and $\sigma\in\Aut_{\kk}(A)$ such that $B=A[x;\sigma]$. If $A$ is a noetherian UFR, then so is $B$.
\end{prop}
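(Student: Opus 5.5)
The plan is to apply Proposition \ref{prop ufr and ore ext}(1) directly to $A$ and $\sigma$. We must show that every nonzero $\sigma$-prime ideal of $A$ contains a nonzero principal $\sigma$-ideal. The natural route is to prove that $\sigma$ has finite order modulo inner automorphisms of $A$. Alternatively, we can show that some power $\sigma^n$ fixes a large enough central subring, so that $\sigma$-primes reduce to finite $\sigma$-orbits of primes.

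First we collect the structure available from $B$. By Proposition \ref{prop pi AS-regular}, $B$ is a noetherian PI domain that is finitely generated over its center. Since $A\cong B/Bx$ is a subring of $B$, $A$ is a PI ring. It is noetherian, since $A=B/xB$ and $x$ is normal in $B$. Because $B=A[x;\sigma]$ is PI, a standard argument shows that $\sigma$ must have finite order modulo inner automorphisms on the relevant quotient. Concretely, if $Z(B)$ contains a nonzero element of $x$-degree $n\geqslant 1$, say $z=\sum_{i\leqslant n} a_i x^i$ with $a_n\neq 0$, then comparing leading terms in $zb=bz$ for $b\in A$ gives $a_n\sigma^n(b)=b\,a_n$. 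So $\sigma^n$ agrees on $A$ with conjugation by the regular normal element $a_n$. The center of a prime PI ring $A[x;\sigma]$ is nontrivial in positive $x$-degrees. Indeed, the central localization is a central simple algebra finite over its center, and $x$ is integral over it. I expect this step to give such $n$ and $a_n$.

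Next we use this to verify the hypothesis of Proposition \ref{prop ufr and ore ext}(1). Let $I$ be a nonzero $\sigma$-prime ideal of $A$. Then $I=P\cap\sigma(P)\cap\dots\cap\sigma^{m-1}(P)$ for some prime $P$ whose $\sigma$-orbit is finite. The orbit is finite because $\sigma^n$ is inner-up-to-normal-element and so fixes every prime not containing $a_n$. If $a_n\in P$, we handle it by choosing $z$ suitably, or by noting that the prime orbit of minimal primes over $Aa_n$ is finite. Since $A$ is a UFR, $P$ contains a nonzero principal prime $Ap$. Then $\sigma^i(P)\supseteq A\sigma^i(p)$, and the product $c=p\,\sigma(p)\cdots\sigma^{m'-1}(p)$ over a full $\sigma$-period of the ideal $Ap$ is normal and lies in $I$. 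The ideal $Ac$ is $\sigma$-stable because $\sigma^{m'}(Ap)=Ap$, so $Ac$ is a nonzero principal $\sigma$-ideal inside $I$. The set of height-one principal primes inside $P$ is finite, so the orbit of $Ap$ under $\sigma$ is finite, and the product is well defined up to reordering. Normality of the product of normal elements is immediate.

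The main obstacle is the first step: producing the central element of positive $x$-degree, and handling primes containing $a_n$ so that the $\sigma$-orbits are finite. As a fallback I would use the grading. $B$ is connected graded, so $\sigma$ preserves degrees on $A$, and finite-dimensionality of graded pieces together with the PI condition should force finite orbits on the finitely many height-one graded primes. By Corollary \ref{cor graded}, only graded primes matter for the UFR property.
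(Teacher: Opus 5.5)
Your route differs from the paper's. The paper extends $\sigma$ to $\tilde\sigma\in\Aut_{\kk}(B)$ with $\tilde\sigma(x)=x$. Since $xb=\tilde\sigma(b)x$ and $B$ is a domain, $\tilde\sigma$ fixes $Z(B)$, so $\tilde\sigma\in\Oz(B)$, which is finite by \cite[Theorem E]{CGWZ25}. Hence $\sigma$ has finite order, and the ``in particular'' part of Proposition~\ref{prop ufr and ore ext}(1) applies.

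You only obtain that $\sigma^n$ is induced by a regular normal element, which is neither inner nor of finite order. So you must verify the $\sigma$-prime condition by hand. The payoff is that you avoid the ozone-group theorem and use little about $B$ beyond its being finite over its center. The step you flag as the main obstacle is in fact easy. If $Z(B)\subseteq A$, then $B=\bigoplus_{i\geqslant 0}Ax^i$ would be an infinite direct sum of nonzero $Z(B)$-submodules, contradicting Proposition~\ref{prop pi AS-regular}. So some central $z=\sum_{i\leqslant n}a_ix^i$ has $n\geqslant 1$ and $a_n\neq 0$. Comparing coefficients in $zb=bz$ and $zx=xz$ gives $a_n\sigma^n(b)=ba_n$ and $\sigma(a_n)=a_n$, i.e.\ $\sigma^n=\eta_{a_n}^{-1}$.

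\textbf{The gap} is the finiteness of the $\sigma$-orbit of $Ap$, which is exactly what makes $Ac$ a $\sigma$-ideal. Your reason, that ``the set of height-one principal primes inside $P$ is finite'', is false. A prime of height $\geqslant 2$ typically contains infinitely many height-one primes (e.g.\ $(x,y)\subseteq\kk[x,y]$), and $\sigma$ does not preserve that set anyway. The orbit of $P$ itself needs no argument, since $\sigma$ permutes the finitely many minimal primes over $I$.

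The fix is to apply your $\eta_{a_n}$ observation to $Q=Ap$ rather than to $P$:
\begin{itemize}
\item For a regular normal $a$ and a prime $Q\not\ni a$, the inclusions $\eta_a(Q)a=aQ\subseteq Q$ and $a\eta_a^{-1}(Q)=Qa\subseteq Q$ force, by primeness, $\eta_a^{\pm1}(Q)\subseteq Q$, so $\eta_a(Q)=Q$.
\item Hence $\sigma^n$ fixes every height-one prime not containing $a_n$.
\item The height-one primes containing $a_n$ are the finitely many primes minimal over $Aa_n$, and $\sigma^n$ permutes them because $\sigma^n(a_n)=a_n$.
\end{itemize}
So every height-one prime has a finite $\sigma$-orbit.

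You must also justify ``well defined up to reordering''. One has $\sigma(Ac)=A\sigma(p)\cdots A\sigma^{m'-1}(p)\cdot Ap$, because $\sigma^{m'}(p)$ generates $Ap$. This equals $Ac$ only because distinct height-one primes $Aq\neq Aq'$ with $q,q'$ normal commute: $q\notin Aq'$ gives $\eta_q(Aq')=Aq'$, hence $Aqq'=A\eta_q(q')q=Aq'q$.

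With these repairs your argument works. Drop the grading fallback: the grading of $B$ need not be compatible with $B=A[x;\sigma]$, and there can be infinitely many height-one graded primes.
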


\begin{proof}
    By definition, $xa=\sigma(a)x$ for any $a \in A$. Let  $$\Tilde{\sigma}:B\to B, x\mapsto x \textrm{ and } a\mapsto \sigma(a),\, \forall a \in A.$$
    Then $\Tilde{\sigma}\in\Aut_{\kk}(B)$. Moreover, $xb=\Tilde{\sigma}(b)x$ for any $b \in B$. Since $B$ is a domain, we conclude that $\Tilde{\sigma}(b)=b$, $\forall b\in Z(B)$. In other words, $\Tilde{\sigma}\in\Oz(B)$. By \cite[Theorem E]{CGWZ25} or \cite{LWZ25}, $\Tilde{\sigma}$ is of finite order. Since $\sigma=\Tilde{\sigma}|_A$, $\sigma$ is also of finite order. The conclusion follows from Proposition \ref{prop ufr and ore ext}.
\end{proof}

We would like to ask the following question.
\begin{ques}
    Are all noetherian PI AS-regular algebras noetherian UFRs?
\end{ques}

\section{Some noetherian AS-regular UFRs beyond PI cases}
In this section we give some non-PI AS-regular algebras which are also noetherian UFRs. As previously explained in the Introduction, it is possible to infer from \cite[Theorem 3.6]{LLR06} and Proposition \ref{prop ufr and ore ext} that all skew polynomial rings are noetherian UFRs. We now provide a direct proof of this assertion.

Nonzero normal elements in a prime ring are regular. For any regular normal element $z$ of a $\kk$-algebra $A$, $z$ induces a $\kk$-automorphism $\eta_z$ of $A$ given by $za=\eta_z(a)z$ for all $a \in A$. Let $S_{\mathcal{P}}$ be a skew polynomial ring of dimension $n$, then $S_{\mathcal{P}}$ is an $\mathbb{Z}^n$-graded $\kk$-algebra. Moreover, $x_i$ is a homogeneous regular normal element such that $\eta_{x_j}(x_i)=p_{ij}x_i$ for all $1\leqslant i,j\leqslant n$.

\begin{lem}\label{lem homo normal elements}
    Let $z$ be a nonzero normal elements of $S_{\mathcal{P}}$. If $z=z_1+z_2+\cdots +z_k$, where $z_j$ are nonzero homogeneous components of $z$ with respect to the $\mathbb{N}^n$-grading, then $z_j$ are normal elements such that $\eta_{z_j}=\eta_z$.
\end{lem}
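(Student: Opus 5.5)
The plan mirrors the argument already used in the proof of Proposition \ref{prop graded}: it combines the $\mathbb{Z}^n$-grading of $S_{\mathcal{P}}$ with the fact that $S_{\mathcal{P}}$ is a domain. The key point is that the automorphism $\eta_z$ is itself graded. Every $\eta_{x_j}$ rescales each monomial $x_1^{a_1}\cdots x_n^{a_n}$ by a nonzero scalar, so it preserves the $\mathbb{Z}^n$-grading. We would first show that $\eta_z$ also sends homogeneous elements to homogeneous elements of the same degree.

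Fix a homogeneous element $a$ of degree $\alpha$ and write $\eta_z(a)=\sum_\beta b_\beta$ in homogeneous components. Consider the identity
\[
(z_1+\cdots+z_k)\,a=\Big(\sum_\beta b_\beta\Big)(z_1+\cdots+z_k),
\]
and order the degrees $\deg z_1<\cdots<\deg z_k$ in a total order on $\mathbb{Z}^n$ compatible with addition, for instance the lexicographic order. This is the same ordered-group device as in Proposition \ref{prop graded}.

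Since $S_{\mathcal{P}}$ is a $\mathbb{Z}^n$-graded domain, products of nonzero homogeneous elements are nonzero and have additive degrees. Compare the lowest-degree terms on both sides. On the left it is $z_1a$, of degree $\deg z_1+\alpha$. On the right it is $b_{\beta_{\min}}z_1$, of degree $\beta_{\min}+\deg z_1$. Hence $\beta_{\min}=\alpha$, and comparing highest-degree terms in the same way gives $\beta_{\max}=\alpha$. Therefore $\eta_z(a)=b_\alpha$ is homogeneous of degree $\alpha$.

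With this in hand, the displayed identity reads $\sum_j z_ja=\sum_j\eta_z(a)z_j$. Both sides have homogeneous components of degree $\deg z_j+\alpha$, and these degrees are pairwise distinct. Comparing components gives $z_ja=\eta_z(a)z_j$ for each $j$ and every homogeneous $a$. By linearity this extends to all $a\in S_{\mathcal{P}}$, so $z_jS_{\mathcal{P}}\supseteq S_{\mathcal{P}}z_j$ with equality, because $\eta_z$ is surjective.

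It remains to identify the automorphism attached to $z_j$. Since $z_j\neq 0$ and $S_{\mathcal{P}}$ is a domain, $z_j$ is regular, and $\eta_{z_j}(a)$ is uniquely determined by $z_ja=\eta_{z_j}(a)z_j$. The relation just proved then gives $\eta_{z_j}=\eta_z$.

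The only delicate step is showing that $\eta_z$ preserves homogeneity. This requires the extremal-degree comparison with respect to a total order on $\mathbb{Z}^n$, and it relies essentially on $S_{\mathcal{P}}$ being a domain, so that leading terms cannot cancel.
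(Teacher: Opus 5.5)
Your argument is correct, but it takes a somewhat different route from the paper's.

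\textbf{The paper's route.} The paper gets normality of each $z_j$ for free. Homogeneous elements of $S_{\mathcal{P}}$ are scalar multiples of monomials $x_1^{\alpha_1}\cdots x_n^{\alpha_n}$, hence products of the normal generators $x_i$. It then compares homogeneous components only in the identities $zx_i=\eta_z(x_i)z$, obtaining $z_jx_i=\eta_z(x_i)z_j$. Finally it uses that $S_{\mathcal{P}}$ is a domain to conclude $\eta_{z_j}(x_i)=\eta_z(x_i)$ on generators.

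\textbf{Your route.} You never use the monomial structure. You first show that $\eta_z$ is degree-preserving, using the extremal-degree comparison from Proposition \ref{prop graded}. You then read off $z_ja=\eta_z(a)z_j$ for every homogeneous $a$, which gives normality of $z_j$ and $\eta_{z_j}=\eta_z$ at once.

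\textbf{Comparison.} Your version is more general: it works verbatim for any domain graded by an ordered abelian group, with no need for homogeneous elements to be normal in advance. It also makes explicit that $\eta_z(x_i)$ is homogeneous of degree $\deg x_i$. The paper's phrase ``comparing the homogeneous components'' tacitly relies on this fact. The paper's version is shorter because it exploits the special shape of $S_{\mathcal{P}}$.

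One small slip. From $z_ja=\eta_z(a)z_j$, the inclusion you get directly is $z_jS_{\mathcal{P}}\subseteq S_{\mathcal{P}}z_j$, and surjectivity of $\eta_z$ gives the reverse inclusion. You wrote the inclusion the other way round, but the conclusion $z_jS_{\mathcal{P}}=S_{\mathcal{P}}z_j$ is unaffected.
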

\begin{proof}
    Note that each homogeneous element of $S_{\mathcal{P}}$ is of the form $k_{\alpha}x_1^{\alpha_1}x_2^{\alpha_2}\cdots x_n^{\alpha_n}$ for some $k_{\alpha}\in \kk$ and $\alpha=(\alpha_i,\cdots, \alpha_n)\in \mathbb{N}^n$. Since products of normal elements are still normal, all the homogeneous elements of $A$ are normal elements.
    
    Given that $zx_i=\eta_z(x_i)z$ for each $1\leqslant i\leqslant n$, we infer by comparing the homogeneous components that $z_jx_i=\eta_z(x_i)z_j$ holds for all $z_j$. Since each $z_j$ is a normal element, it follows that $z_jx_i=\eta_{z_j}(x_i)z_j$. Due to the fact that $S_{\mathcal{P}}$ is a domain, we deduce that $\eta_{z_j}(x_i)=\eta_z(x_i)$ for all $1\leqslant i\leqslant n$. Therefore, $\eta_{z_j}=\eta_z$ for every $j$.
\end{proof}

\begin{lem}\label{lem mutual eigenvalue}
    Let $z$ and $z'$ be two regular normal elements of a $\kk$-algebra $A$. If $\eta_z(z')=\lambda z'$ for some $\lambda\in \kk$, then $\eta_{z'}(z)=\lambda^{-1}z$.
\end{lem}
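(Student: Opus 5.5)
We compute $zz'$ in two ways and use that regular elements can be cancelled. By the definition of $\eta_z$ applied to $a=z'$, we have
\[
zz'=\eta_z(z')\,z=\lambda z'z .
\]
On the other hand, by the definition of $\eta_{z'}$ applied to $a=z$,
\[
z'z=\eta_{z'}(z)\,z' .
\]

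First we check that $\lambda\neq 0$. Since $z$ and $z'$ are regular, the product $zz'$ is regular, and in particular nonzero. If $\lambda=0$, the first identity would give $zz'=0$, which is impossible. So $\lambda\in\kk^{\times}$.

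Dividing the first identity by $\lambda$ gives $z'z=\lambda^{-1}zz'$. Comparing this with the second identity yields
\[
\eta_{z'}(z)\,z'=\lambda^{-1}z\,z' .
\]
Because $z'$ is regular, it can be cancelled on the right, and we obtain $\eta_{z'}(z)=\lambda^{-1}z$.

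The only step that needs any care is the nonvanishing of $\lambda$, and this follows from the regularity of $zz'$. The remaining steps are direct rewriting with the defining relations $za=\eta_z(a)z$ and $z'a=\eta_{z'}(a)z'$.
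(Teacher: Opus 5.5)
Your proposal is correct and is essentially the paper's argument: compute $zz'$ in two ways using $zz'=\eta_z(z')z=\lambda z'z$ and $z'z=\eta_{z'}(z)z'$, then cancel the regular element $z'$ on the right. Your explicit check that $\lambda\neq 0$ is a small point the paper leaves implicit; there it follows from the resulting identity $z=\lambda\,\eta_{z'}(z)$ together with $z\neq 0$.
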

\begin{proof}
    By definition, $z'\eta_{z'}^{-1}(z)=zz'=\eta_z(z')z=\lambda z'z=\lambda \eta_{z'}(z)z'$. The result follows from that $z'$ is a regular element in $A$.
\end{proof}

\begin{lem}\label{lem normal action in skew poly}
    Let $S_{\mathcal{P}}=\kk_{\mathcal{P}}[x_1,\cdots x_n]$ be a skew polynomial ring, and let $z$ be a nonzero normal element of $S_{\mathcal{P}}$. Then there exists $\lambda_i\in\kk^{\times}$ such that $\eta_{x_i}(z)=\lambda_i z$ for any $1\leqslant i\leqslant n$.
\end{lem}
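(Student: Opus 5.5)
The plan is to reduce to homogeneous components and then use Lemma \ref{lem mutual eigenvalue}. First write $z=z_1+\cdots+z_k$ as the sum of its nonzero $\mathbb{N}^n$-homogeneous components. Each $z_j$ is a scalar multiple of a monomial $x_1^{\alpha_1}\cdots x_n^{\alpha_n}$. The relations $x_ix_l=p_{li}x_lx_i$ then give $x_i z_j = c_{ij} z_j x_i$ for an explicit $c_{ij}\in\kk^{\times}$, namely a product of powers of the $p_{li}$. Hence $\eta_{x_i}(z_j)=c_{ij}z_j$ for every homogeneous component. Equivalently, $\eta_{z_j}(x_i)=c_{ij}^{-1}x_i$ by Lemma \ref{lem mutual eigenvalue}, applied to the regular normal elements $x_i$ and $z_j$; regularity holds because $S_{\mathcal{P}}$ is a domain.

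Next, invoke Lemma \ref{lem homo normal elements}: since $z$ is normal, each $z_j$ is normal and $\eta_{z_j}=\eta_z$ for all $j$. Evaluating at $x_i$ gives $c_{ij}^{-1}x_i=\eta_{z_j}(x_i)=\eta_z(x_i)$ for every $j$. Therefore $c_{ij}=c_{i1}$ for all $j$; call this common value $\lambda_i\in\kk^{\times}$. Since $\eta_{x_i}$ is linear,
\[\eta_{x_i}(z)=\sum_j \eta_{x_i}(z_j)=\sum_j \lambda_i z_j=\lambda_i z.\]
Alternatively, this follows in one step from $\eta_z(x_i)=\lambda_i^{-1}x_i$ and Lemma \ref{lem mutual eigenvalue}, applied to $z$ and $x_i$.

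The only point needing care is the existence of the scalars $c_{ij}$, that is, that $\eta_{x_i}$ scales each monomial. This follows directly from the defining relations, and Lemma \ref{lem homo normal elements} already asserts that homogeneous elements are normal. The main substantive input is Lemma \ref{lem homo normal elements}. It forces all components of $z$ to share the same automorphism, and hence the same eigenvalue under each $\eta_{x_i}$.
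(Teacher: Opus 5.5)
Your proof is correct and follows essentially the same route as the paper. You decompose $z$ into $\mathbb{N}^n$-homogeneous components, use Lemma \ref{lem homo normal elements} to get $\eta_{z_j}=\eta_z$, compute the scalar on a monomial, and invoke Lemma \ref{lem mutual eigenvalue}. The only difference is that the paper computes $\eta_z(x_i)=\eta_{z_1}(x_i)=\lambda_i x_i$ from a single component and applies Lemma \ref{lem mutual eigenvalue} once to $z$ and $x_i$, which is your ``alternative'' one-step version, instead of comparing eigenvalues across all components and summing.
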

\begin{proof}
    Let $z=z_1+z_2+\cdots +z_k$, where $z_j$ are homogeneous components of $z$. Then $z_1=k_{\alpha}x_1^{\alpha_1}x_2^{\alpha_2}\cdots x_n^{\alpha_n}$ for some $k_{\alpha}\in \kk^{\times}$ and $\alpha_i\in \mathbb{N}$. Therefore, $\eta_z(x_i)=\eta_{z_1}(x_i)$, and
    $$\eta_{z_1}(x_i)z_1=z_1x_i= k_{\alpha}x_1^{\alpha_1}x_2^{\alpha_2}\cdots x_n^{\alpha_n} x_i=({p_{i1}}^{\alpha_1}\cdots {p_{in}}^{\alpha_n}) x_iz_1=\lambda_{i}x_iz_1$$  
    where $\lambda_{i}= {p_{i1}}^{\alpha_1}\cdots {p_{in}}^{\alpha_n} \in\kk^{\times}$. Hence $\eta_{z}(x_i)=\eta_{z_1}(x_i)=\lambda_{i}x_i$.
    As a result of Lemma \ref{lem mutual eigenvalue}, $\eta_{x_i}(z)=\lambda_{i}^{-1} z$.
\end{proof}
\begin{prop}\label{prop skew poly}
    All skew polynomial rings $S_{\mathcal{P}}$ are noetherian UFRs.
\end{prop}
\begin{proof}
    We prove by induction on the dimensions of skew polynomial rings.
    
    First note that $\kk[x]$ is the only skew polynomial ring of dimension $1$, which is known to be a noetherian UFD.
    
    Suppose that all the skew polynomial rings of dimension $n$ are noetherian UFRs. Let $S=\kk_{\mathcal{P}}[x_1,\cdots, x_{n+1}]$ be a skew polynomial ring. Then $S$ can be realized as an Ore extension of an $n$-dimensional skew polynomial ring. Precisely, let $R=\kk_{\mathcal{Q}}[x_1,\cdots, x_n]$, where $\mathcal{Q}=(p_{ij})_{1\leqslant i,j\leqslant n}\in \M_n(\kk^{\times})$. Then $S=R[x_{n+1}; \sigma]$, where $\sigma=\eta_{x_{n+1}}|_R$. Precisely, $\sigma(x_i)=p_{i,n+1}x_i$ for any $1\leqslant i\leqslant n$. As a skew polynomial ring of dimension $n$, $R$ is a UFR by induction hypothesis. According to Proposition \ref{prop ufr and ore ext}, it suffices to prove that every nonzero $\sigma$-prime ideal of $R$ contains a nonzero principal $\sigma$-ideal.

    Let $I$ be a nonzero $\sigma$-prime ideal of $R$. By \cite[5*]{GM75} there exists a prime ideal $J$ of $R$ such that $I=J\cap\sigma(J)\cap\cdots\cap \sigma^k(J)$ for some $k\in \mathbb{N}$. By induction hypothesis, there exists a nonzero normal element $z\in J$ such that $Rz$ is a prime ideal. It follows from Lemma \ref{lem normal action in skew poly} that there exists $\lambda\in\kk$ such that $\sigma(z)=\lambda z$. Therefore, $\sigma(Rz)=R\sigma (z)=Rz$, which implies that $Rz$ is a $\sigma$-ideal. Consequently $Rz\subseteq J\cap\sigma(J)\cap\cdots\cap \sigma^k(J)=I$, which completes the proof.
\end{proof}

As showed in Theorem \ref{thm 3-dim UFR}, noetherian PI AS-regular algebras of dimension $2$ are noetherian UFRs for trivial reason. As for those non-PI ones, we conclude that they are also noetherian UFRs following the classification of AS-regular algebras.
\begin{prop}
    All noetherian AS-regular algebras of dimension $2$ are noetherian UFRs.
\end{prop}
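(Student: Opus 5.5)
We will use the classification of noetherian AS-regular algebras of dimension $2$. Such an algebra is generated in degree $1$ by two generators with one quadratic relation. Up to isomorphism (over an algebraically closed field, and after a linear change of variables in general) it is either the quantum plane $\kk_q[x,y]=\kk\langle x,y\rangle/(yx-qxy)$ with $q\in\kk^\times$, or the Jordan plane $\kk_J[x,y]=\kk\langle x,y\rangle/(yx-xy-x^2)$. If the field is not algebraically closed, I would first reduce to the algebraically closed case. Failing that, I would note that the classification of the bilinear form $x^tMx$ defining the relation still produces, after extension of scalars, one of these two normal forms, and handle any remaining twisted forms directly.

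\textbf{Quantum plane.} $\kk_q[x,y]$ is exactly the skew polynomial ring $S_{\mathcal P}$ of dimension $2$ with $p_{12}=q$. Proposition \ref{prop skew poly} therefore applies directly and gives that it is a noetherian UFR. Alternatively, if $q$ is a root of unity, the algebra is PI, and Theorem \ref{thm 3-dim UFR} covers it.

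\textbf{Jordan plane.} $\kk_J[x,y]$ is an Ore extension $R[y;\delta]$ with $R=\kk[x]$ and $\delta=x^2\frac{d}{dx}$. Indeed, the relation $yx=xy+x^2$ says $ya=ay+\delta(a)$ for $a\in R$.
\begin{enumerate}
\item $R=\kk[x]$ is a commutative PID, hence a noetherian UFR.
\item By Proposition \ref{prop ufr and ore ext}(2), it suffices to show that every nonzero $\delta$-prime ideal $I$ of $\kk[x]$ contains a nonzero principal $\delta$-ideal.
\item Write $I=(f)$. Then $\delta(f)=x^2f'\in(f)$. This forces every root of $f$ to be a root of $x^2f'$ of at least the same multiplicity. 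In characteristic $0$ this makes $f$ a power of $x$ up to a scalar.
\item The ideal $(x)$ is itself a $\delta$-ideal, since $\delta(x)=x^2$. Hence $I\supseteq(x^m)$, and $(x^m)$ is a principal $\delta$-ideal generated by a normal element.
\item In characteristic $p$, take any nonzero $\delta$-ideal $(f)$. It contains $(f^p)$, and since $f^p\in\kk[x^p]$ we have $\delta(f^p)=0$. So $(f^p)$ is a principal $\delta$-ideal.
\end{enumerate}
This verifies the hypothesis of Proposition \ref{prop ufr and ore ext}(2), so $\kk_J[x,y]$ is a noetherian UFR. In positive characteristic the Jordan plane is PI anyway, so Theorem \ref{thm 3-dim UFR} also applies there.

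\textbf{Expected obstacle.} The main difficulty is bookkeeping rather than mathematics. We need the classification to hold over our base field $\kk$, and we must check that "principal $\delta$-ideal" in \cite{CJ86} means generated by an element that is normal in $R$ and whose ideal is $\delta$-stable. For the commutative ring $\kk[x]$ this is automatic.
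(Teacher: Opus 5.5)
Your strategy is the paper's: quote the classification, send the quantum planes to Proposition \ref{prop skew poly}, and treat the remaining case as $\kk[x][y;\delta]$ via Proposition \ref{prop ufr and ore ext}(2). The gap is that the classification you quote is too narrow. The paper's definition of AS-regular does not require generation in degree $1$, and noetherian AS-regular algebras of dimension $2$ need not be generated in degree $1$.

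Besides the quantum planes, the list contains the whole family $\kk\langle x,y\rangle/(xy-yx+x^{k})$ with $k\geqslant 2$, $\deg x=1$, $\deg y=k-1$. Quantum planes with $\deg x\neq\deg y$ are harmless, since they are the same rings. The family $\kk\langle x,y\rangle/(xy-yx+x^{k})$ is exactly what the paper's proof handles, and you only treat $k=2$.

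For $k\geqslant 3$ these are genuinely new rings, not regradings of rings on your list:
\begin{itemize}
\item The quotient by the commutator ideal is $\kk[x,y]/(x^k)$. This separates them from the Jordan plane and from every quantum plane, whose corresponding quotients are $\kk[u,v]/(uv)$ or $\kk[u,v]$.
\item In characteristic $0$ they are not PI: after inverting the normal element $x$, the elements $x$ and $x^{-k}y$ generate a Weyl algebra.
\end{itemize}
So neither Proposition \ref{prop skew poly}, nor your Jordan-plane computation, nor Theorem \ref{thm 3-dim UFR} covers them. The repair costs nothing. Such an algebra is $\kk[x][y;\delta]$ with $\delta=x^k\frac{d}{dx}$, and your steps 3--5 go through verbatim with $x^2$ replaced by $x^k$.

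Two smaller points.

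First, steps 3--5 do more work than necessary. Since $\kk[x]$ is a PID, for \emph{any} derivation $\delta$ every nonzero $\delta$-prime ideal is itself a nonzero principal $\delta$-ideal. So the hypothesis of Proposition \ref{prop ufr and ore ext}(2) holds for all $k$ and all characteristics with no computation. Your separate characteristic-$p$ step is actually more careful than the paper's assertion that $(x)$ is the only nonzero $\delta$-prime of $\kk[x]$. That assertion fails in characteristic $p$: $(x^p-1)$ is a maximal $\delta$-ideal, hence $\delta$-prime. The paper's conclusion survives for the reason just given.

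Second, your fallback for non-algebraically-closed $\kk$ is not an argument. ``Reduce to the algebraically closed case'' needs a descent result for the UFR property, which you do not supply. Twisted forms not on your list do occur. For instance, $\mathbb{R}\langle x,y\rangle/(x^2+y^2)$ has no normal elements of degree $1$, yet becomes $\mathbb{C}_{-1}[u,v]$ over $\mathbb{C}$. That example is PI, so Theorem \ref{thm 3-dim UFR} covers it, but non-PI real forms of $\mathbb{C}_q[u,v]$ with $|q|=1$ also exist. The paper's proof likewise just quotes the classification over $\kk$, so you are not behind it here. Still, you should not regard that case as settled.
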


\begin{proof}
    Let $A$ be a noetherian AS-regular algebra of dimension $2$. Then either $A$ is a skew polynomial ring or $A=\kk\langle x,y\rangle/(xy-yx+x^k)$, where $k$ is a positive integer. It suffices to prove that algebras of the latter form are noetherian UFRs.
    
    Note that $A=\kk[x][y;\delta]$, where $\delta(x)=x^k$. By Proposition \ref{prop ufr and ore ext}, it suffices to prove that every nonzero $\delta$-prime ideal of $\kk[x]$ contains a nonzero principal $\delta$-ideal. By a direct calculation one sees easily that the only nonzero $\delta$-prime ideal of $\kk[x]$ is $(x)$, which is principal as desired.
\end{proof}

While the structures of higher-dimensional AS-regular algebras remain enigmatic, there is considerable knowledge regarding $3$-dimensional AS-regular algebras via classification. The classification reveals that $3$-dimensional noetherian AS-regular algebras are all domains. We propose the following question.

\begin{ques}
    Are all noetherian AS-regular algebras of dimension $3$ noetherian UFRs?
\end{ques}

\section*{Acknowledgments}

The authors are grateful to Ruipeng Zhu for helpful comments on an early draft of this paper. 

\bibliographystyle{amsalpha}
\bibliography{refs.bib}

\end{document}